\documentclass{amsart}

\usepackage{amsmath, amsthm, amssymb, xcolor, graphicx, amsfonts, url} 
\usepackage[top=1in, bottom=1in, left=1.2in, right=1.2in]{geometry}

\usepackage[style=alphabetic]{biblatex}
\newcommand{\norm}[1]{\left\|#1\right\|}
\newcommand{\inner}[2]{\left\langle #1, #2 \right\rangle}

\newcommand{\e}[0]{\varepsilon}

\newcommand{\K}[0]{\mathcal{K}}
\renewcommand{\L}[0]{\mathcal{L}}
\renewcommand{\H}[0]{\mathcal{H}}

\newtheorem{theorem}{Theorem}[section]
\newtheorem{lemma}[theorem]{Lemma}
\newtheorem{proposition}[theorem]{Proposition}
\newtheorem{corollary}[theorem]{Corollary}
\theoremstyle{definition}
\newtheorem{definition}[theorem]{Definition}
\theoremstyle{remark}
\newtheorem{remark}[theorem]{Remark}
\newtheorem{example}[theorem]{Example}

\title{Flattening and asymptotic orthogonalization of Completely Positive Maps}
\author{Yoonje Jeong}
\address{Department of Mathematics, University of California San Diego, 9500 Gilman Drive, La Jolla, CA 92093, USA}
\email{yojeong@ucsd.edu}
\date{\today}

\begin{document}

\begin{abstract}
    Let $M$ be a $\mathrm{II}_1$ factor, $N$ a tracial von Neumann algebra, and $\Phi: M \rightarrow N$ a subtracial completely positive map. For an irreducible $\mathrm{II}_1$ subfactor $P \subseteq M$, we characterize when $\Phi$ exhibits a \textit{flattening} property under conjugation by unitaries in $P$. To be specific, we show that the failure of a Pimsner-Popa type inequality for $E_P \circ \Phi^* \circ \Phi$ is the precise obstruction, equivalently characterized by left weak mixing of a naturally associated $P$-$N$ bimodule. As an application, we obtain an asymptotic orthogonalization result generalizing a result of Popa \cite{Po19}.
\end{abstract}

\maketitle

\section{Introduction}

Let $A$ and $B$ be von Neumann subalgebras of a $\mathrm{II}_1$ factor $M$. The \textit{orthogonalization problem} for the pair $A, B$ asks whether there exists a unitary $u \in M$ such that $uAu^*$ and $B$ are orthogonal in the sense that
\begin{equation*}
    \tau(uau^*b) = \tau(a)\tau(b)
\end{equation*}
for all $a \in A$ and $b \in B$. Equivalently, this amounts to finding a unitary $u \in M$ satisfying
\begin{equation*}
    u(A \ominus \mathbb{C})u^* \perp B \subseteq L^2(M).
\end{equation*}
For instance, any unitary $u \in M$ that is free from $B$ automatically satisfies this orthogonality relation. The orthogonalization problem provides a natural source of invariants for the inclusion $A \subseteq M$, and has been studied, among others, in \cite{Po83} and \cite{Po95}. More recently, Popa proved in \cite[Corollary 1.2]{Po19} the following. Let $\omega$ be a free ultrafilter on $\mathbb{N}$, and let $Q \subseteq M$ be a von Neumann subalgebra satisfying $M \not\prec_M Q$ in the sense of \cite{Po06a} and \cite{Po06b} (see Definition \ref{def-xcorner}). If $B \subseteq M$ is a separable von Neumann subalgebra, then there exists a unitary $u \in M^\omega$ such that $uBu^*$ and $Q^\omega$ are orthogonal.

A key ingredient in the proof is \cite[Lemma~2.3]{Po19}, which furnishes a sufficient condition for the \textit{flattening} of elements of $M$ onto their trace. Precisely, it asserts that whenever $M \nprec_M Q$, there exists a net $(u_i)_{i \in I} \subseteq \mathcal{U}(M)$ such that
\begin{equation*}
    \lim_{i \in I} \norm{E_Q(u_i x u_i^*) - \tau(x)}_2 = 0
\end{equation*}
for all $x \in M$. He proved this result using the so-called `patching' technique similar to his previous paper \cite{Po95}. In this paper, using different approaches based on Popa's Local Quantization Principle and an integration technique (see subsection \ref{new}), we establish a necessary and sufficient condition for this phenomenon to occur in a more general setting of a subtracial completely positive map $\Phi$ from a $\mathrm{II}_1$ factor $M$ to an arbitrary tracial von Neumann algebra $N$. This recovers the above sufficient condition in \cite[Lemma 2.3]{Po19} as a special case. For instance, we may take $\Phi$ to be a \textit{quantum channel}, i.e., a trace-preserving completely positive map. To be precise, we will prove the following result:

\begin{theorem} \label{main}
    Let $M$ be a $\mathrm{II}_1$ factor and $N$ a tracial von Neumann algebra. Suppose that $\Phi: M \rightarrow N$ is a subtracial completely positive map and that $P \subseteq M$ is an irreducible (i.e., $P' \cap M = \mathbb{C}$) $\mathrm{II}_1$ subfactor of $M$. Then the following are equivalent:
    \begin{itemize}
        \item[(a)] There exists a net $(u_i)_{i \in I} \subseteq \mathcal{U}(P)$ such that
        \begin{equation*}
            \lim_{i \in I} \norm{\Phi(u_i x u_i ^*) - \tau(x) \Phi(1)}_2 = 0
        \end{equation*}
        for all $x \in M$.
    
        \item[(b)] There is no $a \in P$ with $\tau(a) \neq 0$ such that
        \begin{equation*}
            (E_P \circ \Phi^* \circ \Phi)(x) \geq a^* x a
        \end{equation*}
        for all $x \in P_+$.
        
        \item[(c)] The $P\text{-}N$ bimodule,
        \begin{equation*}
            \overline{P \xi_\Phi N} = \overline{\operatorname{span}} \{x\xi_\Phi y : x \in P , y \in N\} \simeq \mathcal{H}(\Phi|_P)
        \end{equation*}
        is left weakly mixing.
    \end{itemize}
    Moreover, if $M$ is separable, we may replace the net in condition (a) by a sequence.
\end{theorem}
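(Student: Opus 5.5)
Write $\Phi_0 = \Phi|_P$, and let $\xi_\Phi$ denote the canonical vector of the Stinespring/GNS-type correspondence $\H(\Phi)$. By construction
\[
\inner{x\xi_\Phi y}{x'\xi_\Phi y'} = \tau\bigl(y'^*\Phi(x'^*x)y\bigr).
\]
The key observation is that (a) is really a statement about asymptotic vanishing of matrix coefficients. Since $\Phi$ is subtracial, $\norm{\Phi(z)}_2 \le \norm{z}_2$, so by density it is enough to treat $x \in M$ with $\tau(x)=0$. For such $x$ we have
\[
\norm{\Phi(uxu^*)}_2^2 = \tau\bigl(\Phi(uxu^*)^*\Phi(uxu^*)\bigr) = \tau\bigl(ux^*u^*\,(\Phi^*\Phi)(uxu^*)\bigr).
\]

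\textbf{Plan for (a)$\Leftrightarrow$(c).} Condition (a) asks for a net $u_i \in \mathcal{U}(P)$ with $\Phi(u_ixu_i^*)\to 0$ for all trace-zero $x$. I will rewrite $\Phi(uxu^*)$ through the bimodule: for $y\in N$, $\tau(\Phi(uxu^*)y)$ is a coefficient of the form $\inner{u\,x\,u^*\xi_\Phi}{\xi_\Phi y^*}$ in $\H(\Phi)$. Since $M$ is not contained in $P$, I first reduce to the $P$-$P$ structure of $L^2(M)$. Let $E_P$-decompose $x = E_P(x) + (x-E_P(x))$. Then:
\begin{itemize}
\item[(i)] The $P$ part is governed by left weak mixing of $\overline{P\xi_\Phi N}$, meaning there are $u_i\in\mathcal{U}(P)$ with $\norm{E_N(\inner{u_i\eta}{\zeta})}\to 0$ (in the $N$-valued inner product sense) for all vectors $\eta,\zeta$ in the bimodule.
\item[(ii)] The part orthogonal to $P$ is handled by irreducibility. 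Since $P'\cap M=\mathbb{C}$, averaging $u(\cdot)u^*$ over $\mathcal{U}(P)$ sends $L^2(M)\ominus L^2(P)$ weakly to $0$. I will need to combine (i) and (ii) with a single net. I plan to take a product or diagonal net using that $P$ is a II$_1$ factor, where Popa's Local Quantization Principle gives unitaries simultaneously doing both.
\end{itemize}
Concretely, (c)$\Rightarrow$(a) is proved by expanding $\norm{\Phi(uxu^*)}_2^2$ as a finite sum of coefficients $\inner{u\eta u^*}{\zeta}$ and invoking weak mixing. For (a)$\Rightarrow$(c), apply (a) with $x = v^*y$-type products to get vanishing of the coefficients on a total set, then use density.

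\textbf{Plan for (b)$\Leftrightarrow$(c).} This is a Pimsner--Popa / intertwining dichotomy. The map $\Psi = E_P\circ\Phi^*\circ\Phi|_P$ is a subtracial cp map on $P$, i.e.\ a $P$-$P$ correspondence $\H(\Psi)$. Left weak mixing of $\overline{P\xi_\Phi N}$ fails exactly when $\overline{P\xi_\Phi N}\otimes_N\overline{N\xi_\Phi P}$ contains a nonzero vector that is left $P$-invariant in a suitable sense. Following the usual Popa intertwining proof, failure of weak mixing yields a finite-trace projection in a basic-construction-type commutant. It also yields a vector $\eta$ with $\inf_{u\in\mathcal{U}(P)}\norm{E(\inner{u\eta}{\eta})}>0$. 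Taking the element of minimal norm in the closed convex hull of $\{u\,T\,u^*\}$ produces a nonzero $P$-central element. Translating this back through $\H(\Psi)$ gives an operator $a\in P$ with $\Psi(x)\ge a^*xa$, where $\tau(a)\ne 0$ is obtained from nonvanishing against $\xi$.

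Conversely, given such an $a$, pairing with $u$ shows
\[
\tau\bigl(\Psi(u^*u)\bigr) \ge |\tau(a)|^2,
\]
uniformly in $u$, which obstructs mixing.

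\textbf{Main obstacle.} I expect the hardest step to be (c)$\Rightarrow$(b) in its converse direction, namely extracting a single element $a\in P$ with $\tau(a)\neq0$ and the operator inequality valid on all of $P_+$. I would first try the integration technique: average $u^*\Psi(u\,\cdot\,u^*)u$ over $\mathcal{U}(P)$ using a convex-hull/weak$^*$ limit, and use irreducibility to force the limiting cp map to dominate a multiple of $a^*\,\cdot\,a$. Finally, the sequence version for separable $M$ follows by a standard diagonal argument over a countable $\norm{\cdot}_2$-dense set.
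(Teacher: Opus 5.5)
The central gap is in (c)$\Rightarrow$(a). Contrary to your assessment of the main obstacle, this is where the real difficulty lies. Writing $\xi=\xi_{\Phi^*\circ\Phi}$, one has
\[
\norm{\Phi(uxu^*)}_2^2=\inner{\xi}{(uxu^*)\,\xi\,(uxu^*)^*}=\inner{u^*\xi u}{x\,(u^*\xi u)\,x^*}.
\]
This is \emph{quadratic} in the moving vector $u^*\xi u$. Left weak mixing of $\overline{P\xi_\Phi N}$ only controls coefficients $\sup_{y\in N_1}|\inner{u\eta y}{\zeta}|$ with $\eta,\zeta$ fixed, which are linear in a one-sided action of $u$. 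So ``expand into coefficients and invoke weak mixing'' has no mechanism behind it. The convex-hull picture does not help either: no central part means $0\in\overline{\mathrm{conv}}\{u^*\xi u\}$, but a quadratic form does not pass to convex combinations.

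Your split $x=E_P(x)+(x-E_P(x))$ is also the wrong one. Irreducibility alone cannot handle $x\perp P$: for $\Phi=\mathrm{id}_M$ one has $\norm{\Phi(uxu^*)}_2=\norm{x}_2$ for all $u$. In fact both hypotheses are needed for every $x$.

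What is missing is the paper's three-step mechanism. (1) Lemma~\ref{encoding} and Proposition~\ref{central} convert left weak mixing of $\overline{P\xi_\Phi N}$ into the statement that $\xi$ has no $P$-central part in $\H(\Phi^*\circ\Phi)$. (2) Bimodule local quantization (Theorem~\ref{QIT}) is applied to the $P$-bimodule $L^2(M)\oplus\H(\Phi^*\circ\Phi)$, where irreducibility enters only as $L^2(M)^P=\mathbb{C}$. This produces one partition $\{p_i\}\subseteq P$ making $\sum_i\norm{p_ixp_i}_2^2$ and $\norm{\sum_ip_i\xi p_i}^2$ simultaneously small. (3) Integrate over $u_\varepsilon=\sum_i\varepsilon_ip_i$, $\varepsilon\in\mathbb{T}^m$. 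Averaging kills every term $\tau(\Phi(p_ix^*p_j)\Phi(p_kxp_\ell))$ except those with $i=j,\,k=\ell$ or $i=\ell,\,j=k$. These two families are bounded by $\norm{\Phi(1)}\sum_i\norm{p_ixp_i}_2^2$ and $\norm{x}^2\norm{\sum_ip_i\xi p_i}^2$ respectively. Your remark that local quantization ``does both'' points this way, but without step (3) nothing turns a small $\sum_ip_i\xi p_i$ into a single good unitary.

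The other directions also have holes.

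\emph{Inequality obstructs flattening.} Your bound $\tau(\Psi(u^*u))\ge|\tau(a)|^2$ is vacuous, since $u^*u=1$. Moreover, (b) only gives $\Psi\ge a^*(\cdot)a$ on $P_+$, not complete positivity of the difference, so the argument must feed in positive elements. The paper (Proposition~\ref{ppt}) takes projections $e_1,\dots,e_n\in P$ of trace $1/n$ with $\sum_ie_i=1$, and shows for every $u\in\mathcal{U}(P)$
\[
\sum_{i=1}^n\norm{\Phi(ue_iu^*)}_2^2=\sum_{i=1}^n\tau\bigl(ue_iu^*\,\Psi(ue_iu^*)\bigr)\ge\Bigl\|\sum_{i=1}^ne_iu^*aue_i\Bigr\|_2^2\ge\norm{E_{P'\cap M}(u^*au)}_2^2=|\tau(a)|^2 .
\]
Flattening the set $\{e_i-1/n\}$ would make the left side at most about $2\norm{\Phi(1)}_2^2/n$, a contradiction for large $n$. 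This is where irreducibility is used in this direction.

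\emph{Direct (a)$\Rightarrow$(c).} Your route via ``$x=v^*y$-type products'' has no visible mechanism, since (a) only concerns two-sided conjugates of fixed elements. This is why the paper routes it through (b).

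\emph{$\neg$(c)$\Rightarrow\neg$(b).} Your minimal-norm sketch is in the right spirit, but two key points are missing. First, the central vector must pair nontrivially with $\xi_{\Phi^*\circ\Phi}$ itself. This is Lemma~\ref{encoding}: cut a finite-$N$-dimensional subbimodule down to a finitely generated one, then use cyclicity of $\xi_\Phi$. Second, factoriality of $P$ makes $\overline{\zeta P}\cong L^2(P)$ for the normalized central part $\zeta$. Projecting $\xi_{\Phi^*\circ\Phi}$ onto it gives $a\in P$ with $\tau(a)=\norm{z}\neq0$ and, by Lemma~\ref{sub}, $a^*(\cdot)a\le E_P\circ\Phi^*\circ\Phi$. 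Your fallback of averaging $u^*\Psi(u\,\cdot\,u^*)u$ has no evident reason to yield such a domination.

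Minor point: subtracial only gives $\norm{\Phi(z)}_2\le\norm{\Phi(1)}^{1/2}\norm{z}_2$, not $\norm{z}_2$. This is harmless for your density reduction.
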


Here we denote $\mathcal{H}(\Psi)$ by the bimodule associated with a completely positive map $\Psi$. We refer to subsection \ref{protein} for the precise definition.

\begin{remark}
    Condition (a) in Theorem \ref{main} has the following obvious reformulation as a flattening condition.
    \begin{itemize}
        \item[(a')] For every finite subset $F$ of $M$ such that $\tau(x) = 0$ for all $x \in F$, we have
        \begin{equation*}
            \inf_{u\in \mathcal{U}(P)} \max_{x \in F} \norm{\Phi(u x u ^*)}_2 = 0.
        \end{equation*}
    \end{itemize}
    We will often use this condition instead of (a) without explicitly mentioning it.
\end{remark}

Condition (b) may be viewed as a generalized bimodule-theoretic negation of the Pimsner-Popa inequality. Recall that for a subfactor $Q \subseteq M$, the Pimsner-Popa inequality \cite{PP86} asserts that
\begin{equation*}
    E_Q(x) \geq [M:Q]^{-1} x
\end{equation*}
for all $x \in M_+$. In this light, Theorem~\ref{main} identifies the failure of a Pimsner-Popa type bound as the precise algebraic obstruction to flattening; see the discussion following Corollary~\ref{suff}. The notion of a left weakly mixing bimodule in condition (c), which extends the classical notion of a weakly mixing group action, was introduced by Peterson and Sinclair \cite{PS12}. In the special case $P = M$, Theorem~\ref{main} specializes to the following.
\begin{corollary} \label{cormain}
    Let $M$ be a $\mathrm{II}_1$ factor, $N$ be a tracial von Neumann algebra, and $\Phi: M \rightarrow N$ be a subtracial completely positive map. Then the following are equivalent:
    \begin{itemize}
        \item[(a)] There exists a net $(u_i)_{i \in I} \subseteq \mathcal{U}(M)$ such that
        \begin{equation*}
            \lim_{i \in I} \norm{\Phi(u_i x u_i ^*) - \tau(x) \Phi(1)}_2 = 0
        \end{equation*}
        for all $x \in M$. (If $M$ is separable, we may replace the net by a sequence.)

        \item[(b)] $\mathcal{H}(\Phi)$ is left weakly mixing as an $M$-$N$ bimodule.
    \end{itemize}
\end{corollary}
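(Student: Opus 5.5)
This is the case $P = M$ of Theorem \ref{main}, so the plan is to check that the hypotheses and conditions specialize correctly, invoking only the theorem itself.

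\textbf{Hypotheses.} Take $P = M$. Since $M$ is a $\mathrm{II}_1$ factor, $P' \cap M = M' \cap M = \mathbb{C}$, so $P$ is an irreducible $\mathrm{II}_1$ subfactor of $M$. Theorem \ref{main} therefore applies to $\Phi$ and $P = M$.

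\textbf{Condition (a).} Condition (a) of Theorem \ref{main} with $\mathcal{U}(P) = \mathcal{U}(M)$ is literally condition (a) of the corollary. The parenthetical claim about sequences when $M$ is separable is the ``moreover'' clause of the theorem.

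\textbf{Condition (c) becomes (b).} Condition (c) of Theorem \ref{main} concerns the bimodule $\overline{P\xi_\Phi N} \simeq \mathcal{H}(\Phi|_P)$. With $P = M$ we have $\Phi|_P = \Phi$, so this is $\mathcal{H}(\Phi)$, regarded as an $M$-$N$ bimodule. Here I would briefly verify, from the definition in subsection \ref{protein}, that $\overline{M\xi_\Phi N}$ is all of $\mathcal{H}(\Phi)$. This holds because $\mathcal{H}(\Phi)$ is by construction the completion of $M \odot N$, with $\xi_\Phi$ corresponding to $1 \otimes 1$. Hence (c) of the theorem is exactly (b) of the corollary, and the equivalence (a)$\Leftrightarrow$(c) of Theorem \ref{main} gives the corollary.

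There is no real obstacle; the only point requiring care is the identification $\overline{M\xi_\Phi N} = \mathcal{H}(\Phi)$. Condition (b) of the theorem, the Pimsner--Popa-type condition with $E_M = \mathrm{id}$, is simply not needed here.
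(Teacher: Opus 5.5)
Your proposal is correct and is exactly the paper's route: the corollary is stated as the specialization $P = M$ of Theorem \ref{main}. Irreducibility holds because $M' \cap M = \mathbb{C}$, and condition (c) becomes left weak mixing of $\overline{M\xi_\Phi N} = \mathcal{H}(\Phi)$ by cyclicity of $\xi_\Phi$.
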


\subsection{Organization of the paper}

Section~\ref{sec-pre} recalls the necessary preliminaries on bimodules of tracial von Neumann algebras, subtracial completely positive maps, and their interrelation, together with basic facts about central vectors and weakly mixing bimodules. In Section~\ref{sec-encoding}, we show that for a bimodule with a cyclic vector, weak mixing is entirely determined by the central part of a single vector (Lemma~\ref{encoding} and Proposition~\ref{central}). Section~\ref{sec-proof} is devoted to the proof of Theorem~\ref{main}; the implication (c)$\Rightarrow$(a) rests on a local quantization argument for bimodules, recently introduced in \cite{QIT26}. In Section~5, we apply Theorem~\ref{main} to obtain an asymptotic orthogonalization result (Corollary~\ref{asym-otho}) for subtracial completely positive maps, extending \cite[Corollary~1.2]{Po19}. Section~\ref{sec-moreapp} collects further applications, including an application to the closure of unitary orbits of subfactors with respect to the Maréchal topology.

\subsection{Acknowledgements and AI tool disclosure.}

The author thanks his advisor Adrian Ioana for suggesting the problem studied in this paper and for his guidance throughout its development. Claude and ChatGPT were used for English language editing, proofreading, and grammatical corrections. In addition, the author used ChatGPT to identify a relevant paper \cite{Choda96}, which motivated the use of outer actions in constructing the finite-index example in Section~\ref{sec-moreapp} regarding the Maréchal topology.

\section{Preliminaries} \label{sec-pre}

\subsection{Tracial von Neumann algebras}

We briefly recall some basic terminology concerning tracial von Neumann algebras; we refer the reader to \cite[Chapter~8]{AP17} for further details. A \textit{tracial von Neumann algebra} is a pair $(M,\tau)$ consisting of a von Neumann algebra $M$ and a normal faithful tracial state $\tau:M\rightarrow\mathbb{C}$. For $x\in M$ and $1\leq p<\infty$, we write
\begin{equation*}
    \norm{x}_p=\tau(|x|^p)^{1/p},
\end{equation*}
where $|x|=(x^*x)^{1/2}$. We denote by $L^2(M)$ the Hilbert space completion of $M$ with respect to $\norm{\cdot}_2$, and identify $M$ with its canonical dense subspace in $L^2(M)$. The inner product on $L^2(M)$ is given by
\begin{equation*}
    \inner{x}{y}=\tau(x^*y), \qquad x,y\in M.
\end{equation*}
We write $\mathcal{U}(M)$ for the unitary group of $M$ and $M_1$ for the operator norm unit ball of $M$.

If $N\subseteq M$ is a von Neumann subalgebra, we always assume that the inclusion is unital and that $N$ is endowed with the restriction of $\tau$. We denote by $E_N:M\rightarrow N$ the unique trace-preserving conditional expectation. It is the restriction of the orthogonal projection $e_N:L^2(M)\rightarrow L^2(N)$ to $M$. A tracial von Neumann algebra $(M,\tau)$ is called a $\mathrm{II}_1$ factor if its center is trivial, i.e., $\mathcal{Z}(M)=\mathbb{C}$.

\subsection{Bimodules, bounded vectors, and tensor products}

Throughout this subsection, $M$, $N$, and $Q$ denote tracial von Neumann algebras; we refer the reader to \cite[Section~8.4]{AP17} for details.

\begin{definition}
    Let $\H$ be a right $N$-module. A vector $\xi \in \H$ is \textit{left $N$-bounded} if the map
    \begin{equation*}
        N \rightarrow \H : x \mapsto \xi x
    \end{equation*}
    extends to a bounded operator $L_\xi : L^2(N) \rightarrow \H$. We denote by $\H^0$ the set of left $N$-bounded vectors in $\H$. The set $\H^0$ is dense in $\H$; see \cite[Proposition~8.4.4]{AP17}.
\end{definition}

If $\eta$ and $\xi$ are left $N$-bounded vectors of a right $N$-module $\H$, then
\begin{equation*}
    L_\eta^* L_\xi : L^2(N) \rightarrow L^2(N)
\end{equation*}
is a bounded operator commuting with the right action of $N$ on $L^2(N)$. Consequently, we have a well-defined sesquilinear map $\inner{\cdot}{\cdot}_N : \H^0 \times \H^0 \rightarrow N$ given by
\begin{equation*}
    \inner{\eta}{\xi}_N = L_\eta^* L_\xi \in N.
\end{equation*}
In this case, we have
\begin{equation*}
    \tau(\inner{\eta}{\xi}_N) = \inner{\widehat{1}}{L_\eta^* L_\xi \widehat{1}} = \inner{\eta}{\xi}
\end{equation*}
for all $\eta, \xi \in \H^0$. We now turn to two constructions on bimodules that will be used throughout the paper: the contragredient bimodule and the tensor product of bimodules.

\begin{definition}
    Given an $M$-$N$ bimodule $\mathcal{H}$, the \textit{contragredient bimodule} is the conjugate Hilbert space $\overline{\mathcal{H}}$ equipped with the actions
    \begin{equation*}
        y \cdot \overline{\xi} \cdot x = \overline{x^{*} \xi y^{*}}
    \end{equation*}
    for $x \in M$ and $y \in N$. In particular, $\overline{\H}$ is an $N$-$M$ bimodule.
\end{definition}

We next define the \textit{(Connes) tensor product} of bimodules. Let $\H$ and $\K$ be $M$-$N$ and $N$-$Q$ bimodules, respectively. We define a sesquilinear form on $\H^0 \odot \K$ by
\begin{equation*}
    \inner{\sum_i \xi_i \otimes \eta_i}{\sum_j \xi_j' \otimes \eta_j'} 
    = \sum_{i,j} \inner{\eta_i}{\inner{\xi_i}{\xi_j'}_N \eta_j'}_\K.
\end{equation*}
Throughout this paper, every inner product is taken to be conjugate-linear in its first argument. We denote by $\mathcal H \otimes_N \K$ the completion of the quotient of $\H^0 \odot \K$ by the null space. It is an $M$-$Q$ bimodule whose actions are given by
\begin{equation*}
    x(\xi \otimes \eta)y = (x\xi) \otimes (\eta y)
\end{equation*}
for $x \in M$ and $y \in Q$. One can easily verify that
\begin{equation*}
    (\xi x) \otimes \eta = \xi \otimes (x\eta)
\end{equation*}
for all $x \in N$.

\subsection{Completely positive maps and bimodules} \label{protein}

Having recalled the basic theory of bimodules, we now turn to their correspondence to completely positive maps. This construction generalizes the familiar $M$-$N$ bimodule $L^2(M)$ associated to a conditional expectation $E_N : M \rightarrow N$. We first explain how bimodules can be constructed from completely positive maps.

\begin{definition}
    A completely positive map $\Phi : M\rightarrow N$ between tracial von Neumann algebras is said to be \textit{subtracial} if $\tau_N \circ \Phi \leq \tau_M$.
\end{definition}

We write $\tau$ in place of $\tau_M$ and $\tau_N$ when the algebras are clear from context. Any subtracial completely positive map $\Phi : M \rightarrow N$ is automatically normal and admits an adjoint, which is a normal completely positive map $\Phi^* : N \rightarrow M$ such that
\begin{equation*}
    \tau(\Phi(x)y) = \tau(x\Phi^*(y))
\end{equation*}
for all $x \in M$ and $y \in N$. For any normal completely positive map $\Phi : M \rightarrow N$ between tracial von Neumann algebras, we have a sesquilinear form $\inner{\cdot}{\cdot}$ on the algebraic tensor product $M \odot N$ given by
\begin{equation*}
    \inner{x_1 \otimes y_1}{x_2 \otimes y_2} = \tau_N(y_1^* \Phi(x_1^* x_2) y_2)
\end{equation*}
for $x_1, x_2 \in M$ and $y_1, y_2 \in N$. We denote by $\mathcal H(\Phi)$ the completion of the quotient of $M \odot N$ by the null space. This Hilbert space carries an $M$-$N$ bimodule structure whose actions are given by
\begin{equation*}
    a (x\otimes y) b = (ax) \otimes (yb)
\end{equation*}
for $a,x \in M$ and $b,y \in N$. Set $\xi_\Phi = 1 \otimes 1 \in \mathcal{H}(\Phi)$, so that
\begin{equation*}
    \tau(\Phi(x)y) = \inner{1 \otimes 1}{x \otimes y} = \inner{\xi_\Phi}{x \xi_\Phi y}
\end{equation*}
for all $x \in M$ and $y \in N$. Note that the vector $\xi_\Phi$ is \textit{cyclic} in the sense that
\begin{equation*}
    \overline{M \xi_\Phi N} = \overline{\operatorname{span}} \{x \xi_\Phi y : x \in M , y \in N\} = \mathcal{H}(\Phi).
\end{equation*}

\begin{remark} \label{embed}
    If $\Phi$ is a subtracial completely positive map, $\Phi^*:N\rightarrow M$ is a well-defined normal completely positive map. Therefore, we can define an $M$-bimodule $\H(\Phi^* \circ \Phi)$, which can be embedded as a subbimodule of the $M$-bimodule
    \begin{equation*}
        \H(\Phi) \otimes _N \H(\Phi^*) \simeq \H(\Phi) \otimes_N \overline{\H(\Phi)}
    \end{equation*}
    with the identification $x \otimes y \mapsto (x\otimes 1) \otimes (1 \otimes y)$. Under this identification, we have $\xi_{\Phi^* \circ \Phi}=\xi_\Phi \otimes \overline{\xi_\Phi}$.
\end{remark}

We next explain the converse direction: bounded vectors of bimodules give rise to completely positive maps. Given a left $N$-bounded vector $\zeta$ of an $M$-$N$ bimodule $\mathcal{H}$, we obtain a corresponding completely positive map $\Phi : M \rightarrow N$ such that
\begin{equation*}
    \tau(\Phi(x) y ) = \inner{\zeta}{x \zeta y}
\end{equation*}
for all $x \in M$ and $y \in N$. Indeed, if we denote by $\pi : M \rightarrow B(\H)$ the unital normal $*$-homomorphism giving the action of $M$ on $\H$, the map
\begin{equation*}
    \Phi : M \rightarrow N : x \mapsto L_\zeta^*\pi(x) L_\zeta
\end{equation*}
satisfies the desired properties. We now refine the above correspondence to the level of subbimodules.

\begin{lemma} \label{sub}
    Let $\Phi : M \rightarrow N$ be a normal completely positive map between tracial von Neumann algebras and let $P \subseteq M$, $Q \subseteq N$ be von Neumann subalgebras. For a $P$-$Q$ subbimodule $\K$ of $\H(\Phi)$, set $\zeta = P_\K(\xi_\Phi)$. Then there exists a completely positive map $\Phi_\K : P \rightarrow Q$ such that
    \begin{equation*}
        \tau(\Phi_\K(x)y) = \inner{\zeta}{x \zeta y}
    \end{equation*}
    for all $x \in P$ and $y \in Q$. Moreover, $E_Q \circ \Phi|_P - \Phi_\K$ is completely positive.
\end{lemma}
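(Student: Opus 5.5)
The plan is to view $\K$ as a $P$-$Q$ bimodule in its own right and apply the bounded-vector construction described just before the lemma. Everything reduces to two points: $\zeta$ is a left $Q$-bounded vector of $\K$, and the resulting map is dominated by $E_Q\circ\Phi|_P$.

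\textbf{Step 1: $\zeta$ is left $Q$-bounded.} Since $\K$ is a $P$-$Q$ subbimodule, the projection $P_\K$ commutes with the left action of $P$ and the right action of $Q$. For $y \in Q$ we therefore have
\begin{equation*}
    \zeta y = P_\K(\xi_\Phi y), \qquad \norm{\zeta y} \le \norm{\xi_\Phi y}.
\end{equation*}
The vector $\xi_\Phi$ is left $N$-bounded, because
\begin{equation*}
    \norm{\xi_\Phi y}^2 = \tau(y^*\Phi(1)y) \le \norm{\Phi(1)}\,\norm{y}_2^2 .
\end{equation*}
Hence $L_\zeta = P_\K L_{\xi_\Phi}|_{L^2(Q)}$ is bounded from $L^2(Q)$ into $\K$.

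\textbf{Step 2: define $\Phi_\K$.} Set $\Phi_\K(x) = L_\zeta^*\pi(x)L_\zeta$ for $x \in P$. As in the preceding construction, this operator on $L^2(Q)$ commutes with the right $Q$-action, so it lies in $Q$. It is completely positive, being a compression of a $*$-representation, and it satisfies
\begin{equation*}
    \tau(\Phi_\K(x)y) = \inner{\zeta}{x\zeta y}.
\end{equation*}

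\textbf{Step 3: domination.} Similarly, $\Phi'(x) := V^*\pi(x)V$ with $V = L_{\xi_\Phi}|_{L^2(Q)}$ lies in $Q$. It satisfies
\begin{equation*}
    \tau(\Phi'(x)y) = \inner{\xi_\Phi}{x\xi_\Phi y} = \tau(\Phi(x)y) \quad \text{for all } y\in Q,
\end{equation*}
so $\Phi' = E_Q\circ\Phi|_P$.

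Now compare the two maps. Because $P_\K$ commutes with $\pi(P)$, we can write
\begin{equation*}
    \Phi'(x) - \Phi_\K(x) = V^*\pi(x)V - V^*P_\K\pi(x)P_\K V = V^*(1-P_\K)\pi(x)(1-P_\K)V.
\end{equation*}
This is again a compression of $\pi$, now by the operator $(1-P_\K)V$, which also intertwines the right $Q$-actions. It is therefore completely positive. Matrix amplification works verbatim, since $(1-P_\K)V\otimes 1_n$ implements the $n$-th amplification.

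\textbf{Expected obstacle.} The only real subtlety is checking that these compressions land in $Q$ and not merely in $B(L^2(Q))$. This follows from the commutant identity $(JQJ)' = Q$ on $L^2(Q)$, because each intertwining operator commutes with right multiplication by $Q$.
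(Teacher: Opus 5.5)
Your proof is correct and follows the paper's argument. The paper likewise bounds $\norm{\zeta y} \le \norm{\xi_\Phi y} \le \norm{\Phi(1)}^{1/2}\norm{y}_2$, builds $\Phi_\K$ and $\Phi_{\K^\perp}$ from $\zeta$ and $\zeta' = P_{\K^\perp}(\xi_\Phi)$, and concludes $E_Q\circ\Phi|_P = \Phi_\K + \Phi_{\K^\perp}$. Your operator $V^*(1-P_\K)\pi(\cdot)(1-P_\K)V$ is exactly $\Phi_{\K^\perp}$, and you simply make explicit the identification $V^*\pi(\cdot)V = E_Q\circ\Phi|_P$ and the vanishing of the cross terms, both of which the paper leaves implicit.
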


\begin{proof}
    The orthogonal projection $P_\K$ from $\H(\Phi)$ onto $\K$ is $P$-$Q$ bilinear. Consequently, we have
    \begin{equation*}
        \norm{\zeta y} = \norm{P_\K(\xi_\Phi) y} = \norm{P_\K(\xi_\Phi y)} \leq \norm{\xi_\Phi y} \leq \norm{\Phi(1)}^{\frac{1}{2}} \norm{y}_2
    \end{equation*}
    for all $y \in Q$. It follows that $\zeta \in \mathcal{K}$ is left $Q$-bounded. In the same way, we see that $\zeta' = P_{\K^\perp}(\xi_\Phi)$ is left $Q$-bounded, where $\K^\perp$ is the orthogonal complement of $\K$ in $\mathcal{H}(\Phi)$. Therefore, we obtain completely positive maps $\Phi_\K, \Phi_{\K^\perp} : P \rightarrow Q$ such that
    \begin{equation*}
        \tau(\Phi_\K(x)y) = \inner{\zeta}{x \zeta y}, \quad \tau(\Phi_{\K^\perp}(x)y) = \inner{\zeta'}{x \zeta' y}
    \end{equation*}
    for all $x \in P$ and $y \in Q$. From $\zeta + \zeta' = \xi_\Phi$, we deduce
    \begin{equation*}
        E_Q \circ \Phi|_P = \Phi_\K + \Phi_{\K^\perp},
    \end{equation*}
    which completes the proof.
\end{proof}

\subsection{Central vectors and weakly mixing bimodules}

We proceed to the notion of central vectors, which will play a key role in detecting weak mixing (see Section \ref{sec-encoding}). Let $M$ be a tracial von Neumann algebra and $\mathcal{L}$ be an $M$-bimodule. A vector $\xi \in \mathcal{L}$ is \textit{$M$-central} if $x \xi = \xi x$ for all $x \in M$. We denote by $\mathcal{L}^M$ the set of central vectors in $\mathcal{L}$; this is a closed subspace of $\mathcal{L}$, though it need not be a subbimodule of $\mathcal{L}$ unless $M$ is abelian. The \textit{central part} of a vector $\eta \in \L$ is its projection onto $\mathcal{L}^M$. Lemma~\ref{large} below shows that the central part of a vector lying in a subbimodule is the same whether computed in the subbimodule or in the ambient bimodule.

\begin{example} \label{irr}
    If $P$ is an irreducible subfactor of a $\mathrm{II}_1$ factor $M$, then $L^2(M)^P = \mathbb{C}$.
\end{example}

\begin{lemma} \label{large}
    Let $M$ be a tracial von Neumann algebra and $\K$ be a subbimodule of an $M$-bimodule $\L$. Then for all $\eta \in \K$ we have $P_{\K^M}\eta = P_{\L^M} \eta$.
\end{lemma}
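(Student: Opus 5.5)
The plan is to show that the two closed subspaces $\K^M$ and $\L^M$ are related by $\K^M = \L^M \cap \K$, and that $\K$ reduces the projection $P_{\L^M}$. For the first claim, note that a vector $\xi \in \K$ is $M$-central as an element of $\K$ exactly when it is $M$-central as an element of $\L$. This is because the bimodule actions on $\K$ are the restrictions of those on $\L$. Hence $\K^M = \L^M \cap \K$.

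The key step is to check that $P_\K$ maps $\L^M$ into itself. Since $\K$ is a subbimodule, the orthogonal projection $P_\K$ onto $\K$ commutes with both the left and the right actions of $M$ on $\L$. Indeed, $\K$ and $\K^\perp$ are both invariant under the left and right actions: invariance of $\K^\perp$ follows from invariance of $\K$ under the adjoint actions $x^*$. Consequently, for $\xi \in \L^M$ and $x \in M$,
\begin{equation*}
    x P_\K(\xi) = P_\K(x\xi) = P_\K(\xi x) = P_\K(\xi) x,
\end{equation*}
so $P_\K(\xi) \in \L^M \cap \K = \K^M$.

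Now fix $\eta \in \K$ and write $\eta = \xi + \xi'$ with $\xi = P_{\L^M}\eta \in \L^M$ and $\xi' \perp \L^M$. Applying $P_\K$ gives
\begin{equation*}
    \eta = P_\K\eta = P_\K\xi + P_\K\xi'.
\end{equation*}
Here $P_\K\xi \in \K^M \subseteq \L^M$. We also have $P_\K\xi' \perp \L^M$: for any $\zeta \in \L^M$,
\begin{equation*}
    \inner{\zeta}{P_\K \xi'} = \inner{P_\K\zeta}{\xi'} = 0,
\end{equation*}
because $P_\K\zeta \in \L^M$. By uniqueness of the orthogonal decomposition with respect to $\L^M$, we get $P_{\L^M}\eta = P_\K\xi \in \K^M$. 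Since $P_{\L^M}\eta$ lies in $\K^M$ and $\eta - P_{\L^M}\eta$ is orthogonal to $\L^M \supseteq \K^M$, it follows that $P_{\L^M}\eta = P_{\K^M}\eta$.

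There is no real obstacle. The only point requiring care is verifying that $P_\K$ is $M$-bimodular, which reduces to the invariance of $\K^\perp$ noted above.
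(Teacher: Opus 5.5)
Your proof is correct and is essentially the paper's argument: both rest on the $M$-bilinearity of $P_\K$, which gives $P_\K(\L^M)\subseteq \K^M$, combined with the self-adjointness of $P_\K$. The paper's route is slightly shorter. It checks directly that $\eta - P_{\K^M}\eta \in \K$ is orthogonal to every $\zeta\in\L^M$, by replacing $\zeta$ with $P_\K\zeta$. You instead decompose $\eta$ with respect to $\L^M$ and show that its $\L^M$-component already lies in $\K^M$.
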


\begin{proof}
    Fix $\zeta \in \L^M$. Since $P_\K$ is $M$-bilinear, we have $P_\K\zeta \in \K^M$. Therefore,
    \begin{equation*}
        \inner{\eta - P_{\K^M}\eta}{\zeta} = \inner{\eta - P_{\K^M}\eta}{P_\K \zeta} = 0,
    \end{equation*}
    hence $\eta - P_{\K^M}\eta \perp \L^M$. From $P_{\K^M}\eta  \in \K^M \subseteq \L^M$, we conclude $P_{\K^M}\eta = P_{\L^M} \eta$. 
\end{proof}

We are ready to define the notion of a \textit{left weakly mixing} bimodule.

\begin{definition} \label{weakly mixing}
    Let $M$ and $N$ be tracial von Neumann algebras. An $M$-$N$ bimodule $\H$ is \textit{left weakly mixing} if it satisfies the following equivalent conditions:
    \begin{itemize}
        \item[(1)] $\{0\}$ is the only $M$-$N$ subbimodule of $\H$ with finite $N$-dimension.

        \item[(2)] As an $M$-bimodule, $\H \otimes_N \overline{\H}$ contains no non-zero central vector.

        \item[(3)] For any finite subset $S$ of $\H$,
        \begin{equation*}
            \inf_{u \in \mathcal{U}(M)} \max_{\xi, \eta \in S} \sup_{y \in N_1} |\inner{u \xi y}{\eta}| = 0.
        \end{equation*}

        \item[(4)] There exists a net $(u_i)_{i \in I} \subseteq \mathcal{U}(M)$ such that
        \begin{equation*}
            \lim_{i \in I} \sup_{y \in N_1} |\inner{u_i \xi y}{\eta}| = 0
        \end{equation*}
        for all $\xi, \eta \in \H$.
    \end{itemize}
    For further equivalent definitions and details, we refer to \cite[Appendix A.2]{Bo14}.
\end{definition}

\section{Detecting the weakly mixing property via a single vector} \label{sec-encoding}

Definition~\ref{weakly mixing} characterizes the weak mixing property of a bimodule $\H$ in terms of the entire space $\H$. In practice, however, $\H$ is often generated by a single cyclic vector, as is the case for $\mathcal{H}(\Phi)$. The following lemma shows that, in this setting, weak mixing is entirely encoded by a condition on this single generator.

\begin{lemma} \label{encoding}
    Let $M$ and $N$ be tracial von Neumann algebras and $\xi$ be a left $N$-bounded cyclic vector of an $M$-$N$ bimodule $\H$. Then the following are equivalent:
    \begin{itemize}
        \item[(a)] $\H$ is left weakly mixing as an $M$-$N$ bimodule.

        \item[(b)] $\xi \otimes \overline{\xi}$ has no central part in the $M$-bimodule $\H \otimes_N \overline{\H}$.
    \end{itemize}
\end{lemma}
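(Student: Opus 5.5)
(a)$\Rightarrow$(b) follows directly from condition (2) of Definition~\ref{weakly mixing}. If $\H$ is left weakly mixing, then $(\H \otimes_N \overline{\H})^M = \{0\}$, so the projection of $\xi\otimes\overline{\xi}$ onto it vanishes. The vector $\xi \otimes \overline{\xi}$ is well defined because $\xi$ is left $N$-bounded.

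For (b)$\Rightarrow$(a) I would argue by contraposition, aiming to produce a nonzero central vector that is not orthogonal to $\xi\otimes\overline{\xi}$. Suppose $\H$ is not left weakly mixing. By condition (1) there is a nonzero $M$-$N$ subbimodule $\K\subseteq\H$ of finite right $N$-dimension. Let $p = P_\K$, which lies in $M'\cap N'$ (commutant of the left and right actions). Since $\xi$ is cyclic, $p\xi\neq 0$: otherwise $p(x\xi y)=x(p\xi)y=0$ for all $x\in M$, $y\in N$, forcing $p=0$.

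The natural central vector is the ``trace'' of $\K$. Because $\K$ has finite $N$-dimension, we can pick a finite (or $\ell^2$-summable) Pimsner--Popa basis $\{\eta_j\}\subseteq\K^0$ with $\sum_j \eta_j\inner{\eta_j}{\cdot}_N = \mathrm{id}_\K$. Then $\zeta=\sum_j \eta_j\otimes\overline{\eta_j}$ converges in $\H\otimes_N\overline{\H}$, with $\|\zeta\|^2=\sum_{i,j}\|\inner{\eta_i}{\eta_j}_N\|_2^2$. This quantity is finite because $\sum_j \inner{\eta_j}{\eta_j}_N$ can be taken to be a projection-like element of trace equal to $\dim_N\K<\infty$ (choose the basis so that $\inner{\eta_i}{\eta_j}_N=\delta_{ij}q_j$ with $q_j$ projections). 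The vector $\zeta$ is independent of the basis and corresponds, under $\H\otimes_N\overline{\H}\cong$ the Hilbert--Schmidt-type operators $\H\to\H$ commuting with the right $N$-action, to the projection $p$ itself. Since $p$ commutes with the left $M$-action, $x\zeta=\zeta x$, so $\zeta$ is a nonzero central vector. Making this identification rigorous is the technical part: one checks directly that $x\zeta = \sum_j x\eta_j\otimes\overline{\eta_j}$ equals $\sum_j \eta_j\otimes\overline{x^*\eta_j}$ by expanding $x\eta_j$ in the basis, $x\eta_j=\sum_i\eta_i\inner{\eta_i}{x\eta_j}_N$, and moving $N$-coefficients across the tensor sign.

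Finally I would compute
\[
\inner{\zeta}{\xi\otimes\overline{\xi}}=\sum_j \tau\bigl(\inner{\eta_j}{\xi}_N\inner{\xi}{\eta_j}_N\bigr)
=\sum_j\|\inner{\eta_j}{\xi}_N\|_2^2=\|p\xi\|^2>0,
\]
where the last equality uses the basis expansion $p\xi=\sum_j\eta_j\inner{\eta_j}{\xi}_N$ together with orthogonality of its terms. This computation is legitimate because $\xi$ is left bounded, so all the $N$-valued inner products lie in $N$. Hence $\xi\otimes\overline{\xi}$ has nonzero central part, contradicting (b).

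The main obstacle is the middle step: ensuring that $\zeta$ is well defined in the Connes tensor product and central. Convergence when $\dim_N\K$ is finite but the basis is infinite, and bilinearity bookkeeping for the contragredient, both need care. An alternative that avoids these issues is to take any central $\zeta\neq0$ supplied by condition (2) and show that it cannot be orthogonal to all of $x\xi y\otimes\overline{x'\xi y'}$. Centrality alone does not propagate this orthogonality back to $\xi\otimes\overline{\xi}$, however, which is why I prefer the explicit construction from $p$.
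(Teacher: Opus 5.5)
Your proposal is correct and is essentially the paper's argument. You build the central vector $\zeta=\sum_j \eta_j\otimes\overline{\eta_j}$ from a basis of $\K$, check $x\zeta=\zeta x$ by expanding $x\eta_j$ in that basis and moving the $N$-coefficients across the tensor sign, and use cyclicity of $\xi$ to get $\inner{\zeta}{\xi\otimes\overline{\xi}}=\sum_j\|\inner{\eta_j}{\xi}_N\|_2^2>0$. The one difference is technical. The paper first cuts $\K$ down by a central projection of $N$ (via \cite[Lemma A.1]{Va07}) to make it finitely generated, and then uses the finite basis $U^{-1}(pe_j)$ coming from $\K\cong pL^2(N)^{\oplus n}$. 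Your orthonormal basis with $\inner{\eta_i}{\eta_j}_N=\delta_{ij}q_j$ handles the general case directly: the terms $\eta_j\otimes\overline{\eta_j}$, and likewise the doubly indexed terms $\eta_i\otimes\overline{\eta_j\inner{\eta_j}{x^*\eta_i}_N}$ in the centrality check, are mutually orthogonal and square-summable because $\sum_j\tau(q_j)=\dim_N\K<\infty$. So the convergence issues you flag are harmless, and you get the cleaner identity $\inner{\zeta}{\xi\otimes\overline{\xi}}=\|P_\K\xi\|^2$.
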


\begin{proof}
    The implication (a)$\Rightarrow$(b) is trivial. For the converse, suppose (a) fails to hold so that there exists a nonzero $M$-$N$ subbimodule $\K$ of $\H$ with finite $N$-dimension. By \cite[Lemma A.1]{Va07}, for any $\e > 0$, there exists a projection $z \in \mathcal{Z}(N)$ such that $\tau(z) > 1-\e$ and $\K z$ is finitely generated as a right $N$-module. By replacing $\K$ with $\K z$, we may assume that $\K$ is finitely generated as a right $N$-module.
    
    Choose $n \in \mathbb{N}$ and a right $N$-linear isomorphism $U : \mathcal{K} \rightarrow p L^2(N)^{\oplus n}$ where $p$ is a projection in $M_n(N)$. The left action of $M$ on $\K$ induces a unital $*$-homomorphism $\varphi: M \rightarrow p M_n(N) p$ such that
    \begin{equation*}
        U(x \eta) = \varphi(x) U\eta
    \end{equation*}
    for all $x \in M$ and $\eta \in \K$. Let $\zeta_j = U^{-1}(p e_j) \in \K$ where $e_j$ denotes the $j$-th coordinate vector
    \begin{equation*}
        e_j = (0,\cdots, \widehat{1}, \cdots, 0) \in L^2(N)^{\oplus n}.
    \end{equation*}
    We make the following claim:
    \begin{center}
        $x \zeta_i = \sum_{j=1}^n \zeta_j \varphi(x)_{ji}$ for all $x \in M$ and $i=1,2,\ldots,n$.
    \end{center}
    Indeed, the assertion follows from applying the right $N$-linear map $U^{-1}$ to
    \begin{equation*}
        \varphi(x)(pe_i) = p \varphi(x) e_i = p \left( \sum_{j=1}^n e_j \cdot \varphi(x)_{ji}\right) = \sum_{j=1}^n pe_j \cdot \varphi(x)_{ji}.
    \end{equation*}
    
    Now consider a vector
    \begin{equation*}
        \zeta = \sum_{i=1}^n \zeta_i \otimes \overline{\zeta_i} \in \K \otimes_N \overline{\K} \subseteq \H \otimes_N \overline{\H}.
    \end{equation*}
    It is clear that $\zeta$ is left bounded. Furthermore, $\zeta$ is central. Indeed, by the claim,
    \begin{equation*}
        x \zeta = \sum_{i,j=1}^n \zeta_j \varphi(x)_{ji} \otimes \overline{\zeta_i} = \sum_{i,j=1}^n \zeta_j \otimes \overline{\zeta_i \varphi(x)_{ji}^*} = \sum_{i,j=1}^n \zeta_j \otimes \overline{\zeta_i \varphi(x^*)_{ij}}
    \end{equation*}
    for all $x \in M$. Again by the claim, we obtain
    \begin{equation*}
        x \zeta = \sum_{j=1}^n \zeta_j \otimes \overline{x^* \zeta_j} = \zeta x.
    \end{equation*}
    
    It remains to show that $\xi \otimes \overline{\xi}$ has a nonzero central part, i.e.,
    \begin{equation*}
        \xi \otimes \overline{\xi} \notin \left( \H \otimes_N \overline{\H} \right) \ominus \left( \H \otimes_N \overline{\H} \right)^M.
    \end{equation*}
    Since $\zeta \in \left( \H \otimes_N \overline{\H} \right)^M$, the desired assertion follows if we can prove $\inner{\zeta}{\xi \otimes \overline{\xi}} \neq 0$. Let $\eta = U P_\K \xi \in p L^2(N)^{\oplus n}$. Since $\xi$ is left $N$-bounded, so is $\eta$. Moreover, $\eta \neq 0$ because $P_\K \xi = U^{-1} \eta$ is a cyclic vector of $\K$.
    
    On the other hand, we have
    \begin{equation*}
        \inner{\zeta}{\xi \otimes \overline{\xi}} = \inner{\zeta}{P_\K \xi \otimes \overline{P_\K \xi}} = \sum_{i=1}^n \inner{pe_i \otimes \overline{pe_i}}{\eta \otimes \overline{\eta}}.
    \end{equation*}
    Set $a_i = \inner{pe_i}{\eta}_N \in N$. At least one of $a_1,\ldots,a_n$ is nonzero since $\eta \in p L^2(N)^{\oplus n}$ is nonzero. From
    \begin{equation*}
        \inner{\zeta}{\xi \otimes \overline{\xi}} = \sum_{i=1}^n \inner{pe_i \otimes \overline{pe_i}}{\eta \otimes \overline{\eta}} = \sum_{i=1}^n \inner{\overline{pe_i}}{a_i \overline{\eta}} = \sum_{i=1}^n \inner{\eta a_i^*}{pe_i},
    \end{equation*}
    we conclude
    \begin{equation*}
        \inner{\zeta}{\xi \otimes \overline{\xi}} = \sum_{i=1}^n \tau(\inner{\eta a_i^*}{pe_i}_N) = \sum_{i=1}^n \tau(a_i \inner{\eta}{pe_i}_N) = \sum_{i=1}^n \tau(a_i a_i^*) \neq 0. \qedhere
    \end{equation*}
\end{proof}

\begin{proposition} \label{central}
    Let $\Phi : M \rightarrow N$ be a subtracial completely positive map and $P$ be a von Neumann subalgebra of $M$. Then the following are equivalent:
    \begin{itemize}
        \item[(a)] $\overline{P \xi_{\Phi} N}$ is left weakly mixing as a $P$-$N$ bimodule.
        \item[(b)] $\xi_{\Phi^\circ \Phi}  \in \H(\Phi ^ \circ \Phi) \ominus \H(\Phi ^* \circ \Phi)^P$.
    \item[(c)] We have
    \begin{equation*}
        \inf_{u \in \mathcal{U}(P)} \norm{\Phi(x u x')}_1 =0
    \end{equation*}
    for all $x, x' \in P$.
\end{itemize}
\end{proposition}
\begin{proof}
    We first prove (a)$\Leftrightarrow$(b). By Lemma \ref{encoding}, $\H_0 = \overline{P\xi_\Phi N}$ is left weakly mixing if and only if $\xi_\Phi \otimes \overline{\xi_\Phi}$ has no central part in the $P$-bimodule $\H_0 \otimes_N \overline{\H_0}$. By Remark \ref{embed} and Lemma \ref{large}, this is equivalent to (b). Next we show (a)$\Leftrightarrow$(c). By definition, (a) is equivalent to
    \begin{equation*}
        \inf_{u \in \mathcal{U}(P)} \sup_{z \in N_1} |\inner{u(x \otimes y)z}{x' \otimes y'}| = 0, \quad \forall x, x' \in P, \ y,y' \in N.
    \end{equation*}
    This is again equivalent to
    \begin{equation*}
        \inf_{u \in \mathcal{U}(P)} \sup_{z \in N_1} \left| \tau(z^* y^* \Phi(x^* u x') y')\right| = 0 , \quad \forall x, x' \in P, \ y,y' \in N.
    \end{equation*}
    By the duality of $L^1(N)$ and $N = L^\infty(N)$, (a) is equivalent to
    \begin{equation*}
        \inf_{u \in \mathcal{U}(P)} \norm{y^* \Phi(x^* u x') y'}_1 =0, \quad \forall x, x' \in P, \ y,y' \in N.
    \end{equation*}
    Clearly, this is the same as condition (c).
\end{proof}

\section{Proof of the main result} \label{sec-proof}

In this section, we prove Theorem \ref{main} by establishing the cycle (a)$\Rightarrow$(b)$\Rightarrow$(c)$\Rightarrow$(a).

\subsection{Proof of (a)$\Rightarrow$(b): Pimsner-Popa type inequality}

The following proposition is indeed more general than the implication (a)$\Rightarrow$(b). We remark that the inequality \eqref{ppt-ineq} below plays a role analogous to the Pimsner-Popa inequality for conditional expectations, which was discussed in \cite{PP86}.

\begin{proposition} \label{ppt}
    Let $\Phi : M \rightarrow N$ be a subtracial completely positive map between tracial von Neumann algebras and $P$ be a diffuse von Neumann subalgebra of $M$. Suppose there exists $a \in P$ such that $E_{P' \cap M} (a) \neq 0$ and
    \begin{equation} \label{ppt-ineq}
        (E_P \circ \Phi^* \circ \Phi)(x) \geq a^*xa
    \end{equation}
    for all $x \in P_+$. Then there exists a finite subset $F$ of $P$ with $\tau(x) = 0$ for all $x \in F$ such that 
    \begin{equation*}
        \inf_{u \in \mathcal U(P)} \max_{x \in F} \|\Phi(uxu^*)\|_2 >0.
    \end{equation*}
\end{proposition}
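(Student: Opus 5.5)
The plan is to argue by contradiction: assume flattening holds for every finite trace-zero subset of $P$, and test the inequality \eqref{ppt-ineq} on conjugates of the projections of a fine partition of unity in $P$. The left-hand side of \eqref{ppt-ineq} becomes a sum of $\|\Phi(\cdot)\|_2^2$ terms that can be made small, while the right-hand side is bounded below by $\norm{E_{P'\cap M}(a)}_2^2>0$, uniformly in the conjugating unitary.

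\emph{Setup.} Set $\delta=\norm{E_{P'\cap M}(a)}_2^2>0$. Fix $k\in\mathbb{N}$, to be chosen later. Since $P$ is diffuse, there are projections $p_1,\dots,p_k\in P$ with $\sum_j p_j=1$ and $\tau(p_j)=1/k$. Let
\[
F=\{p_j-\tfrac1k : j=1,\dots,k\}\subseteq P,
\]
a finite set of trace-zero elements. Suppose, towards a contradiction, that for every $\e>0$ there is $u\in\mathcal{U}(P)$ with $\norm{\Phi(u(p_j-\frac1k)u^*)}_2<\e$ for all $j$. Put $y_j=up_ju^*\in P_+$, a projection. Then $\Phi(y_j)=\frac1k\Phi(1)+\Phi(u(p_j-\frac1k)u^*)$, so $\norm{\Phi(y_j)}_2\le \frac1k\norm{\Phi(1)}_2+\e$.

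\emph{Upper bound.} Since $y_j\in P$, $\Phi$ is positive and $\Phi(y_j)$ is self-adjoint,
\[
\tau\bigl(y_j(E_P\circ\Phi^*\circ\Phi)(y_j)\bigr)=\tau\bigl(y_j\Phi^*(\Phi(y_j))\bigr)=\tau(\Phi(y_j)^2)=\norm{\Phi(y_j)}_2^2 .
\]
Summing over $j$ gives
\[
\sum_j\norm{\Phi(y_j)}_2^2\le \frac{2\norm{\Phi(1)}_2^2}{k}+2k\e^2 .
\]

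\emph{Lower bound.} Applying \eqref{ppt-ineq} to $x=y_j$ and pairing with the positive element $y_j$ gives
\[
\norm{\Phi(y_j)}_2^2\ge\tau(y_ja^*y_jay_j)=\norm{y_jay_j}_2^2=\norm{p_jbp_j}_2^2,
\qquad b=u^*au .
\]
The vectors $p_jbp_j$ are mutually orthogonal, and $\sum_jp_jbp_j=E_{\{p_1,\dots,p_k\}'\cap M}(b)$. Since $P'\cap M\subseteq\{p_j\}'\cap M$, we get
\[
\sum_j\norm{p_jbp_j}_2^2=\Bigl\|\sum_jp_jbp_j\Bigr\|_2^2\ge\norm{E_{P'\cap M}(b)}_2^2 .
\]
As $u\in P$ commutes with $P'\cap M$, we have $E_{P'\cap M}(u^*au)=u^*E_{P'\cap M}(a)u$, whose $2$-norm is $\sqrt{\delta}$. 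This bound is uniform in $u$, which is what makes a fixed $F$ work.

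\emph{Conclusion.} Combining the two bounds,
\[
\delta\le \frac{2\norm{\Phi(1)}_2^2}{k}+2k\e^2 .
\]
Choose $k$ first, with $2\norm{\Phi(1)}_2^2/k<\delta/2$, then $\e$ small; this is a contradiction. Hence $\inf_{u}\max_{x\in F}\norm{\Phi(uxu^*)}_2>0$.

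The main obstacle is the lower bound. The hypothesis only gives $E_{P'\cap M}(a)\neq 0$, not $\tau(a)\neq0$, so the estimate must avoid the crude Cauchy--Schwarz bound $\norm{p_jbp_j}_2^2\ge |\tau(p_jb)|^2/\tau(p_j)$. It must also be independent of the unitary $u$, which is chosen after $F$. Identifying $\sum_jp_jbp_j$ as a conditional expectation onto a relative commutant containing $P'\cap M$, and using that conjugation by $u\in P$ preserves the norm of $E_{P'\cap M}$, resolves both issues.
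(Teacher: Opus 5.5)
Your proof is correct and follows the paper's argument essentially step for step. The shared steps are the same equal-trace partition of unity $F=\{e_i-\tfrac1n\}$ and the identity $\sum_i\|\Phi(ue_iu^*)\|_2^2=\sum_i\tau\bigl(ue_iu^*(E_P\circ\Phi^*\circ\Phi)(ue_iu^*)\bigr)$, together with the lower bound $\bigl\|\sum_i e_iu^*aue_i\bigr\|_2^2\geq\|E_{P'\cap M}(u^*au)\|_2^2=\|E_{P'\cap M}(a)\|_2^2$. Your explicit tracking of the $2k\varepsilon^2$ term is slightly more careful than the paper, which absorbs it into the factor $2$ without comment.
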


\begin{remark}
    If $P$ is an irreducible subfactor of $M$ as in Theorem \ref{main}, $E_{P' \cap M} (a) \neq 0$ if and only if $\tau(a) \neq 0$. This proves the implication (a)$\Rightarrow$(b) of Theorem \ref{main}.
\end{remark}

To prove Proposition \ref{ppt}, fix $n \in \mathbb{N}$ with $n \norm{E_{P' \cap M} (a)}^2 > 2 {\norm{\Phi(1)}}^2_2$. Because $P$ is diffuse, there exist projections $e_1,\cdots, e_n \in P$ with $\sum_i e_i = 1$ and $\tau(e_i) = 1/n$ for all $i$. We show that a finite set
\begin{equation*}
    F = \{e_i - \tau(e_i) : 1 \leq i \leq n\} \subseteq P \cap \ker\tau
\end{equation*}
satisfies the desired inequality
\begin{equation} \label{starbucks}
    \inf_{u \in \mathcal{U}(P)} \max_{1 \leq i \leq n} \norm{\Phi(u e_i u^*) - \tau(e_i) \Phi(1) }_2 > 0.
\end{equation}
If the left-hand side of \eqref{starbucks} equals zero, there exists $u \in \mathcal{U}(P)$ such that
\begin{equation} \label{restriction}
    \sum_{i=1}^n \norm{\Phi(u e_i u^*)}_2^2 \leq 2 {\norm{\Phi(1)}}_2^2  \cdot \sum_{i=1}^n \tau(e_i)^2 = \frac{2{\norm{\Phi(1)}}_2^2 }{n} < \norm{E_{P' \cap M}(a)}_2^2.
\end{equation}

On the other hand, we have
\begin{equation*}
    \sum_{i=1}^n \norm{\Phi(u e_i u^*)}_2^2 = \sum_{i=1}^n \tau(\Phi(u e_i u^*)^2) = \sum_{i=1}^n \tau(u e_i u^* (\Phi^* \circ \Phi) (u e_i u^*)).
\end{equation*}
Since $u e_i u^* \in P$ for each $i$, it follows that
\begin{equation*}
    \sum_{i=1}^n \norm{\Phi(u e_i u^*)}_2^2 = \sum_{i=1}^n \tau(u e_i u^* (E_P \circ \Phi^* \circ \Phi) (u e_i u^*)).
\end{equation*}
From the assumed inequality \eqref{ppt-ineq}, we obtain
\begin{equation*}
    \sum_{i=1}^n \norm{\Phi(u e_i u^*)}_2^2 \geq \sum_{i=1}^n \tau(u e_i u^* a^* u e_i u^* a) = \sum_{i=1}^n \norm{e_i u^* a u e_i}_2^2 = \norm{\sum_{i=1}^n e_i u^* a u e_i}_2^2.
\end{equation*}
As we have
\begin{equation*}
    \sum_{i=1}^n e_i y e_i = E_{(\oplus_i \mathbb{C} e_i)'\cap M}(y)
\end{equation*}
for all $y \in M$, and $P ' \cap M \subseteq (\oplus_i \mathbb{C} e_i)'\cap M$, we obtain
\begin{equation*}
    \sum_{i=1}^n \norm{\Phi(u e_i u^*)}_2^2 \geq \norm{E_{P' \cap M} (u^* au )}_2^2 = \norm{E_{u (P'\cap M) u^*}(a)}_2^2 = \norm{E_{P' \cap M}(a)}_2^2,
\end{equation*}
the desired contradiction with \eqref{restriction}.

\subsection{Proof of (b)$\Rightarrow$(c)}
We will prove the contrapositive of (b)$\Rightarrow$(c) by proving the following more general proposition.

\begin{proposition} \label{bc}
    Let $\Phi: M \rightarrow N$ be a subtracial completely positive map between tracial von Neumann algebras and $P$ be a von Neumann subalgebra of $M$ that is itself a factor. If $\H_0 = \overline{P \xi_\Phi N}$ is not left weakly mixing as a $P$-$N$ bimodule, then there exists $a \in P$ such that $\tau(a) \neq 0$ and
    \begin{equation*}
        (E_P \circ \Phi^* \circ \Phi)(x) \geq  a^* x a
    \end{equation*}
    for all $x \in P_+$.
\end{proposition}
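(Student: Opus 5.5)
The plan is to apply Proposition~\ref{central} to pass from the failure of weak mixing to a nonzero central vector in the $M$-bimodule $\H(\Phi^*\circ\Phi)$ restricted to $P$. We then identify the subbimodule generated by that central vector with the trivial bimodule $L^2(P)$ (this is where factoriality of $P$ enters), and finally apply Lemma~\ref{sub} to this subbimodule to extract the desired completely positive minorant $x\mapsto a^*xa$ of $E_P\circ\Phi^*\circ\Phi|_P$.

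\textbf{Step 1: a nonzero central vector.} Write $\xi=\xi_{\Phi^*\circ\Phi}\in\H(\Phi^*\circ\Phi)$ and regard $\H(\Phi^*\circ\Phi)$ as a $P$-bimodule. Since $\H_0$ is not left weakly mixing, Proposition~\ref{central} ((a)$\Leftrightarrow$(b)) shows that $\xi$ is not orthogonal to $\H(\Phi^*\circ\Phi)^P$. Hence its central part $\zeta_0=P_{\H(\Phi^*\circ\Phi)^P}\xi$ is nonzero, and
\[
\inner{\zeta_0}{\xi}=\norm{\zeta_0}^2>0 .
\]

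\textbf{Step 2: the bimodule generated by $\zeta_0$ is $L^2(P)$.} Let $\L=\overline{P\zeta_0}=\overline{P\zeta_0P}$; the two closures agree because $\zeta_0$ is central, so $\L$ is a $P$-subbimodule. Define the normal positive functional $\varphi(x)=\inner{\zeta_0}{x\zeta_0}$ on $P$. It is tracial, since
\[
\varphi(xy)=\inner{x^*\zeta_0}{y\zeta_0}=\inner{\zeta_0x^*}{\zeta_0y}=\inner{\zeta_0}{\zeta_0 yx^*}\cdot\text{(rearranged)}=\varphi(yx).
\]
Because $P$ is a factor, $\varphi=c\,\tau$ with $c=\norm{\zeta_0}^2>0$. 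Consequently the map $x\zeta_0\mapsto\sqrt c\,\hat x$ extends to a $P$-bimodule unitary $V:\L\to L^2(P)$ sending $\zeta_0$ to $\sqrt c\,\hat 1$.

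\textbf{Step 3: Lemma~\ref{sub} on $\L$.} Apply Lemma~\ref{sub} to the $P$-$P$ subbimodule $\L$ of $\H(\Phi^*\circ\Phi)$, with $Q=P$ (the lemma is stated for a general normal completely positive map, here $\Phi^*\circ\Phi$). It yields $\Phi_\L:P\to P$ such that $E_P\circ\Phi^*\circ\Phi|_P-\Phi_\L$ is completely positive, in particular positive on $P_+$. Let $b=V(P_\L\xi)\in L^2(P)$.

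We claim $b$ is a bounded vector, i.e.\ $b\in P$. The proof of Lemma~\ref{sub} shows that $P_\L\xi$ is left $P$-bounded, because $\norm{\xi y}\le\norm{\Phi^*\Phi(1)}^{1/2}\norm y_2$. A vector of $L^2(P)$ that is left $P$-bounded lies in $P$ (bounded vectors of $L^2(P)_P$ are exactly $P$). Then for $x,y\in P$,
\[
\tau(\Phi_\L(x)y)=\inner{b}{xby}=\tau(b^*xby),
\]
so $\Phi_\L(x)=b^*xb$. Setting $a=b$ gives $(E_P\circ\Phi^*\circ\Phi)(x)\ge a^*xa$ for all $x\in P_+$.

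\textbf{Step 4: $\tau(a)\neq0$.} Since $\zeta_0\in\L$ and $V$ is unitary,
\[
\sqrt c\,\overline{\tau(a)}\ \text{(up to conjugation)}=\inner{\sqrt c\,\hat1}{b}=\inner{\zeta_0}{P_\L\xi}=\inner{\zeta_0}{\xi}=c\neq0 .
\]
Hence $\tau(a)\neq0$, as required.

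\textbf{Main obstacle.} The delicate point is Step 2 together with the boundedness in Step 3. One must check carefully that the tracial-functional argument really identifies $\L$ with $L^2(P)$ as a bimodule, using centrality of $\zeta_0$ on both sides. One must also verify that the left-boundedness of $\xi$ survives the passage to $P_\L\xi$, so that $b$ is an honest element of $P$ and not merely an $L^2$-vector.
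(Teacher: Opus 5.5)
Your proposal is correct and follows the paper's proof essentially step for step. Like the paper, you take the nonzero $P$-central part $z$ of $\xi_{\Phi^*\circ\Phi}$ via Proposition~\ref{central}, use factoriality of $P$ to identify $\overline{zP}$ with $L^2(P)$, take $a$ to be the element of $P$ corresponding to the left-bounded vector $P_{\K}(\xi_{\Phi^*\circ\Phi})$, and apply Lemma~\ref{sub} to get $a^*(\cdot)a\le E_P\circ\Phi^*\circ\Phi$ on $P_+$ with $\tau(a)=\norm{z}\neq 0$.

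The only blemishes are cosmetic. In your traciality check the middle term should be $\inner{\zeta_0}{\zeta_0\,yx}$, not $\zeta_0\,yx^*$. In Step 4 you get exactly $\sqrt{c}\,\tau(a)=c$, so $\tau(a)=\sqrt{c}$ with no conjugation ambiguity.
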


For the proof of Proposition \ref{bc}, observe from Proposition \ref{central} that the $P$-central part $z$ of $\xi_{\Phi^* \circ \Phi}$ in $\H_0 \otimes_N \overline{\H_0}$ is non-zero. Set $\zeta = z / \norm{z} \in \mathcal{H}(\Phi^* \circ \Phi)^P$ and consider the tracial state
\begin{equation*}
    \tau_0 : P \rightarrow \mathbb{C} : x \mapsto \inner{\zeta}{\zeta x}.
\end{equation*}
Because $P$ is a factor, we deduce $\tau_0 = \tau|_P$. In particular, we have
\begin{equation*}
    \norm{\zeta x} = \sqrt{\inner{\zeta}{\zeta (xx^*)}} = \sqrt{\tau(xx^*)} = \norm{x}_2
\end{equation*}
for all $x \in P$ hence there is a well-defined $P$-bilinear isomorphism
\begin{equation*}
    \alpha : L^2(P) \rightarrow \mathcal{K} : = \overline{\zeta P} \leq \mathcal{H}(\Phi^* \circ \Phi) 
\end{equation*}
such that $\alpha (\widehat{x}) = \zeta x$ for all $x \in P$.
    
Note that $\eta= P_\K (\xi_{\Phi^* \circ \Phi})$ is a left $P$-bounded vector of $\K$. Therefore, $\alpha^{-1}(\eta)$ is a left bounded vector in the standard representation $L^2(P)$ of $P$, hence $ a : = L_{\alpha^{-1}(\eta)} \in P$. Moreover, we have
\begin{equation*}
    \tau(a) = \inner{\widehat{1}}{a \widehat{1}} = \inner{\widehat{1}}{\alpha^{-1}(\eta)} = \inner{\alpha(\widehat{1})}{\eta} = \inner{\zeta}{\eta}.
\end{equation*}
Since $\zeta \in \K$, we deduce that
\begin{equation*}
    \tau(a) = \inner{\zeta}{\xi_{\Phi^* \circ \Phi}} = \norm{z} \neq 0.
\end{equation*}
    
By Lemma \ref{sub}, there exists a completely positive map $\Phi_\K : P \rightarrow P$ such that
\begin{equation*}
    \tau(\Phi_\K(x) y ) = \inner{\eta}{x \eta y}
\end{equation*}
for all $x,y \in P$ and $E_P \circ \Phi ^ * \circ \Phi \geq \Phi_\K$. Because
\begin{equation*}
    \tau(\Phi_\K(x)y)  
    = \inner{\alpha^{-1}(\eta)}{x \alpha^{-1}(\eta)y} = \inner{a \widehat{1}}{(xa) \widehat{1} y} = \tau(a^* x a\, y)
\end{equation*}
for all $x,y \in P$, we deduce $\Phi_\K = a^* (\cdot) a$ and obtain the desired inequality.

\subsection{Proof of (c)$\Rightarrow$(a)\label{new}: Local quantization of bimodules} The proof depends on the following theorem and the integration trick discussed in \cite{QIT26}. This special case, where $\H=L^2(M)$ for some $ P \subseteq M$, amounts to Popa's Local Quantization Principle \cite[Lemma A.1.1]{Po94}.

\begin{theorem}[\cite{QIT26}, Theorem 3.2] \label{QIT}
    Let $P$ be a tracial von Neumann algebra of type $\mathrm{II}_1$ and $\H$ be a $P$-bimodule. Then for every $\varepsilon > 0$ and finite subset $S$ of $\H \ominus \H^P$, there exists a partition of unity $\{p_i\}_{i=1}^m$ in $P$ such that
    \begin{equation*}
        \left\|\sum_{i=1}^m p_i \zeta p_i\right\| < \varepsilon
    \end{equation*}
    for every $\zeta \in S$.
\end{theorem}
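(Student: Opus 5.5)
The plan is to first reduce to a single vector, and then run a maximality argument in the spirit of Popa's original proof of \cite[Lemma A.1.1]{Po94}. The basic observation is that for a partition of unity $\{p_i\}$ in $P$, with $A=\oplus_i\mathbb{C}p_i$, the map $E_A(\zeta)=\sum_i p_i\zeta p_i$ is the orthogonal projection of $\H$ onto the $A$-central vectors $\H^A$. Two consequences follow.
\begin{itemize}
\item Refining the partition enlarges $A$, shrinks $\H^A$, and so can only decrease $\norm{E_A\zeta}$.
\item If $p_i=\sum_k q_{ik}$ are equal-trace subprojections, then $E_B$ for the refined algebra $B$ is the average of $u\zeta u^*$ over the finite group generated by the unitaries $u=\sum \omega_k q_{ik}$ with $\omega$ a root of unity. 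This is the ``integration trick'': compressions are averages of unitary conjugates.
\end{itemize}

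\textbf{Step 1 (reduction to one vector).} For a finite set $S=\{\zeta_1,\dots,\zeta_k\}$, pass to the diagonal $P$-bimodule $\H^{\oplus k}$. Its central vectors are exactly the tuples of central vectors, so $(\zeta_1,\dots,\zeta_k)\perp(\H^{\oplus k})^P$. Moreover, a single partition that makes $\norm{\sum_i p_i(\zeta_j)_j p_i}<\e$ handles all $\zeta_j$ at once. It therefore suffices to treat one $\zeta\perp\H^P$.

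\textbf{Step 2 (near-minimizer is almost locally central).} Let $\delta=\inf_A\norm{E_A\zeta}$ over finite partitions, and suppose $\delta>0$. Choose $A$ with $\norm{E_A\zeta}^2<\delta^2+\e'$ and set $\zeta'=E_A\zeta$. For every finite-dimensional abelian $B\supseteq A$ in $P$, Pythagoras gives
\begin{equation*}
\norm{\zeta'-E_B\zeta'}^2=\norm{\zeta'}^2-\norm{E_B\zeta'}^2<\e'.
\end{equation*}
Since $E_B\zeta'$ commutes with $B$, $\zeta'$ is $2\sqrt{\e'}$-almost commuting with every unitary of $B$. Finite-spectrum unitaries of $\oplus_i p_iPp_i=A'\cap P$ are $\norm{\cdot}_2$-dense there, as $P$ is of type $\mathrm{II}_1$. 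Hence $\zeta'$ is uniformly almost central for $A'\cap P$.

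\textbf{Step 3 (main obstacle: globalize to $P$).} We must upgrade ``almost central for $A'\cap P$'' to a contradiction with $\zeta\perp\H^P$, which only concerns full $P$-centrality. I would first refine $A$ so that all $p_i$ have equal trace $1/m$; this can be done within $\e'$ using Step 2 and the type $\mathrm{II}_1$ property. Then I would use partial isometries $v_{ij}\in P$ with $v_{ij}^*v_{ij}=p_j$ and $v_{ij}v_{ij}^*=p_i$. From these, form permutation-type unitaries $w_\sigma=\sum_i v_{\sigma(i)i}$, which normalize $A$, so that $w_\sigma\zeta'w_\sigma^*=E_A(w_\sigma\zeta w_\sigma^*)$.

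Averaging $w_\sigma\zeta'w_\sigma^*$ over $\sigma\in S_m$, and combining with the near-invariance of Step 2, should produce a vector that is almost invariant under a generating set of $\mathcal{U}(P)$: the unitaries of $A'\cap P$ together with the matrix units. Such a vector is close to $\H^P$, because almost-invariant vectors are close to the fixed-point space. This is the Kazhdan-type step I expect to be delicate, and I would attempt it via the minimal-norm element of the closed convex hull.

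On the other hand, the projection onto $\H^P$ commutes with conjugation by $P$ and with each $E_A$. Hence every such average stays orthogonal to $\H^P$, and its norm stays $\ge\delta-O(\sqrt{\e'})$, since the averaged vectors have norm about $\delta$ and are nearly aligned. These two facts are incompatible for $\e'$ small, so $\delta=0$.

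Making the averaging over $S_m$ quantitatively compatible with the $A'\cap P$ near-centrality is where I expect the real work. If it fails, I would fall back on Popa's incremental patching: find a small projection $q$ on which $\norm{q\zeta q}$ drops proportionally to $\tau(q)$, then use a maximality argument over partitions.
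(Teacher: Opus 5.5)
The paper gives no proof of this statement. It is imported from \cite[Theorem 3.2]{QIT26} as the bimodule form of Popa's local quantization principle \cite[Lemma A.1.1]{Po94}, so your argument has to stand on its own, and it has a genuine gap in Step 3.

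\textbf{What works.} Steps 1 and 2 are fine; passing from finite-spectrum unitaries to all of $\mathcal{U}(A'\cap P)$ uses normality of both actions. Even the upper-bound half of Step 3 can be made rigorous. Take consistent matrix units $v_{ij}$ with $v_{ii}=p_i$ and set $\bar\zeta=\frac{1}{m!}\sum_\sigma w_\sigma\zeta'w_\sigma^*$. Then $\bar\zeta$ is exactly $w_\sigma$-invariant and $2\sqrt{\e'}$-almost central for $A'\cap P$. Hence it is uniformly $2\sqrt{\e'}$-almost invariant under the whole group $\mathcal{U}(A'\cap P)\rtimes\{w_\sigma\}$, whose linear span is $P$. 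So the minimal-norm point of the closed convex hull of its orbit lies in $\H^P$, and $\bar\zeta\perp\H^P$ gives $\norm{\bar\zeta}\le 2\sqrt{\e'}$. Almost invariance under a mere generating set would not suffice in general; uniformity over the group is what is needed, and here you have it. A smaller point: when $P$ is not a factor, equal traces do not make the $p_i$ equivalent. You need equal center-valued traces, which a refinement of a given $A$ achieves only approximately.

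\textbf{Where it fails.} The gap is the lower bound $\norm{\bar\zeta}\ge\delta-O(\sqrt{\e'})$. Nothing aligns $w_\sigma\zeta'w_\sigma^*$ with $\zeta'$. The unitaries $w_\sigma$ are not in $A'\cap P$, and minimality of $A$ among its refinements says nothing about conjugation by unitaries that permute the $p_i$. The identity $w_\sigma\zeta'w_\sigma^*=E_A(w_\sigma\zeta w_\sigma^*)$ only recovers $\norm{w_\sigma\zeta'w_\sigma^*}=\norm{\zeta'}$.

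In fact no contradiction can be extracted from what Steps 1--2 give you. Let $P$ be a $\mathrm{II}_1$ factor, $\H=L^2(P)$, $\tau(p_i)=1/m$, and $\zeta=\zeta'=\sum_i c_ip_i$ with $\sum_i c_i=0$ and not all $c_i$ zero. Every property of $\zeta'$ your argument uses holds with $\delta=\norm{\zeta'}_2$ and any $\e'>0$:
\begin{itemize}
\item $\zeta'\perp\H^P=\mathbb{C}1$;
\item $\zeta'$ is exactly central for $A'\cap P$;
\item $E_B\zeta'=\zeta'$ for every finite-dimensional abelian $B\supseteq A$;
\item $\norm{\zeta'}^2<\delta^2+\e'$.
\end{itemize}
Yet $\bar\zeta=\frac1m\bigl(\sum_ic_i\bigr)1=0$.

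The true infimum here is $0$, attained by a partition transverse to $A$. With $\omega$ a primitive $m$-th root of unity, $q_k=\frac1m\sum_{i,j}\omega^{k(i-j)}v_{ij}$ satisfies $q_kp_iq_k=\frac1m q_k$, hence $\sum_kq_k\zeta q_k=0$. So the missing idea is a mechanism that converts $\zeta\perp\H^P$ into small compressions along partitions that do \emph{not} refine the current near-minimizer. After Step 2 your argument never uses the infimum over such partitions. That is exactly the substantive content of the local quantization principle, and your fallback to Popa's patching names it without carrying it out.
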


Assume the condition (c) of Theorem \ref{main}, i.e., $\H_0 = \overline{P \xi_\Phi N}$ is left weakly mixing as a $P$-$N$ bimodule. Fix a finite subset $F$ of $M_1 \cap \ker \tau$. We must show
\begin{equation*}
    \inf_{u \in \mathcal{U}(P)} \max_{x \in F} \norm{\Phi(u x u^*)}_2 = 0.
\end{equation*}
By Example \ref{irr}, we have
\begin{equation*}
    F \subseteq \ker \tau \subseteq L^2(M) \ominus \mathbb{C} = L^2(M) \ominus L^2(M)^P.
\end{equation*}
By the assumption and Proposition \ref{central}, we also have
\begin{equation*}
    \xi: = \xi_{\Phi^* \circ \Phi} \in \mathcal{H}(\Phi^* \circ \Phi) \ominus \H(\Phi^* \circ \Phi)^P.
\end{equation*}
Therefore, for the $P$-bimodule
\begin{equation*}
    \H = L^2(M) \oplus \H(\Phi^* \circ \Phi),
\end{equation*}
we have
\begin{equation*}
    S : = \{x \oplus \xi  : x \in F\} \subseteq \H \ominus \H^P.
\end{equation*}

Fix $\varepsilon_0>0$. By Theorem \ref{QIT}, there is a partition of unity $\{p_i\}_{i=1}^m \subseteq \mathcal{P}(P)$ with
\begin{equation*}
    \sum_{x \in F} \norm{\sum_{i=1}^m p_i (x \oplus \xi) p_i }^2< \varepsilon_0.
\end{equation*}
Let us denote $L = \max \{1, \norm{\Phi(1)}\} > 0$ so that we have
\begin{equation} \label{app}
    \norm{\Phi(1)} \sum_{x \in F} \sum_{i=1}^m \norm{p_i x p_i}_2^2  + |F| \sum_{i=1}^m \norm{p_i \xi p_i}^2 \leq L \sum_{x \in F} \norm{\sum_{i=1}^m p_i (x \oplus \xi) p_i }^2< L \varepsilon_0.
\end{equation}

Denote the Haar measure on $\mathbb{T}^m$ by $\mu$. For each $\e = (\e_1,\cdots, \e_m) \in \mathbb{T}^m$, set
\begin{equation*}
    u_\e = \sum_{i=1}^m \e_i p_i \in \mathcal{U}(P),
\end{equation*}
and consider an integral
\begin{equation*}
    I = \int_{\mathbb{T}^m} \sum_{x \in F} \norm{\Phi(u_\e x u_\e^*)}_2^2 \ d \mu (\e).
\end{equation*}
If $I < L\e_0$, since $\mu$ is a probability measure, we have
\begin{equation*}
    \left(\inf_{u\in \mathcal{U}(P)} \max_{x \in F} \norm{\Phi(u x u ^*)}_2 \right)^2 \leq \sum_{x \in F} \norm{\Phi(u_\e x u_\e^*)}_2^2 < L \varepsilon_0
\end{equation*}
for some $\e \in \mathbb{T}^m$. Since $\varepsilon_0 > 0$ was arbitrary and $L$ is fixed, it follows that
\begin{equation*}
    \inf_{u\in \mathcal{U}(P)} \max_{x \in F} \norm{\Phi(u x u ^*)}_2 = 0
\end{equation*}
and we are done.

In order to show $I < L\e_0$, observe
\begin{equation*}
    \begin{split}
        I 
        &= \int_{\mathbb{T}^m} \sum_{x \in F} \norm{\Phi(u_\e x u_e^*)}_2^2 \\ 
        &= \sum_{x \in F} \int_{\mathbb{T}^m} \tau(\Phi(u_\e x^* u_e^*)\Phi(u_\e x u_e^*)) \\
        &= \sum_{x \in F} \sum_{i,j,k,\ell}^m \left( \int_{\mathbb{T}^m} \e_i \overline{\e_j} \e_k \overline{\e_\ell} \ d \mu(\e) \right) \cdot \tau(\Phi(p_i x^* p_j)\Phi(p_k x p_\ell)).
    \end{split}
\end{equation*}
The given integral
\begin{equation*}
    \int_{\mathbb{T}^m} \e_i \overline{\e_j} \e_k \overline{\e_\ell} \ d \mu(\e)
\end{equation*}
equals $1$ when
\begin{center}
    `$i=j$ and $k = \ell$' \quad or \quad `$i=\ell$ and $j = k$'
\end{center}
and $0$ otherwise. This shows $I = A + B - C$ where
\begin{equation*}
    \begin{cases}
        A &= \sum_{x \in F} \sum_{i,j=1}^m \tau(\Phi(p_i x^* p_i)\Phi(p_j x p_j)), \\
        B&= \sum_{x \in F} \sum_{i,j=1}^m \tau(\Phi(p_i x^* p_j)\Phi(p_j x p_i)), \\
        C&= \sum_{x \in F} \sum_{i=1}^m \tau(\Phi(p_i x^* p_i)\Phi(p_i x p_i)).
    \end{cases}
\end{equation*}
We have $I \leq A+B$ from
\begin{equation*}
    C = \sum_{x \in F} \sum_{i=1}^m \norm{\Phi(p_i x p_i)}_2^2 \geq 0.
\end{equation*}

Recall the Kadison-Schwarz inequality
\begin{equation*}
    \Phi(x)^* \Phi(x) \leq {\norm{\Phi(1)}} \Phi(x^*x), \quad \forall x \in M,
\end{equation*}
which holds for all $2$-positive maps between $C^*$-algebras (\cite[Corollary 2.8]{Choi74}). Since $\Phi$ is subtracial, $\Phi : M \rightarrow N$ can be extended to a bounded linear map $L^2(M) \rightarrow L^2(N)$ with the operator norm $\leq {\norm{\Phi(1)}}^{\frac{1}{2}}$. Therefore,
\begin{equation*}
    A = \sum_{x \in F} \norm{\sum_{i=1}^m \Phi(p_i x p_i)}_2^2 \leq {\norm{\Phi(1)}} \sum_{x \in F} \norm{\sum_{i=1}^m p_i x p_i}_2^2 .
\end{equation*}
Since $p_i M p_i$'s are orthogonal in $L^2(M)$, we obtain an estimate
\begin{equation} \label{esA}
    A \leq \norm{\Phi(1)} \sum_{x \in F} \sum_{i=1}^m \norm{p_i x p_i}_2^2.
\end{equation}
On the other hand, we have
\begin{equation*}
    \tau(\Phi(y^*) \Phi(y)) = \tau(y^* (\Phi^* \circ \Phi) (y)) = \inner{\xi}{y \xi y^*}
\end{equation*}
for all $y \in M$, where $\xi = \xi_{\Phi^* \circ \Phi}$ is the canonical cyclic vector of the $M$-bimodule $\H(\Phi^* \circ \Phi)$. It follows that
\begin{equation*}
    \begin{split}
        B 
        = \sum_{x \in F} \sum_{i,j=1}^m \inner{\xi}{(p_j x p_i) \xi (p_j x p_i)^*}
        = \sum_{x \in F} \inner{ \sum_{i=1}^m p_i \xi p_i }{x \left( \sum_{i=1}^m p_i \xi p_i\right) x^*}.
    \end{split}
\end{equation*}
Because $F \subseteq M_1$, we have an estimate
\begin{equation} \label{esB}
    B \leq \sum_{x \in F} \norm{\sum_{i=1}^m p_i \xi p_i}^2 = |F| \sum_{i=1}^m  \norm{p_i \xi p_i}^2.
\end{equation}
Combining $I \leq A+B$ with \eqref{app}, \eqref{esA} and \eqref{esB}, we conclude
\begin{equation*}
    I \leq \norm{\Phi(1)} \sum_{x \in F} \sum_{i=1}^m \norm{p_i x p_i}_2^2  + |F| \sum_{i=1}^m \norm{p_i \xi p_i}^2 < L\e_0.
\end{equation*}

\section{Application: asymptotic orthogonalization} \label{sec-asym}
From now on, $\omega \in \beta\mathbb{N}\setminus\mathbb{N}$ denotes a fixed
free ultrafilter on $\mathbb{N}$. For a tracial von Neumann algebra
$(M,\tau)$, we write $M^\omega$ for the tracial ultrapower, with its
canonical trace again denoted $\tau$. For the construction and basic properties of the tracial ultrapower, we refer to \cite[Section 5.4]{AP17}. In the following result, $\xi_{\Phi^\omega}$ denotes the canonical cyclic vector of the $M^\omega$-$N^\omega$ bimodule $\mathcal{H}(\Phi^\omega)$ associated with the completely positive map $\Phi^\omega : M^\omega \rightarrow N^\omega$.

\begin{corollary}[asymptotic orthogonalization] \label{asym-otho}
    Let $M$ be a $\mathrm{II}_1$ factor, $P \subseteq M$ an irreducible $\mathrm{II}_1$ subfactor, and $N$ a tracial von Neumann algebra. If $\Phi: M \rightarrow N$ is subtracial and completely positive, then the following are equivalent:
    \begin{itemize}
        \item[(a)] $\overline{P \xi_\Phi N}$ is left weakly mixing as a $P$-$N$ bimodule.

        \item[(b)] For any separable von Neumann subalgebra $B$ of $M$, there exists $u \in \mathcal{U}(P^\omega)$ such that
        \begin{equation*}
            \xi_{\Phi^\omega} N^\omega \perp u (B \ominus \mathbb{C}) u^* \xi_{\Phi^\omega} \subseteq \mathcal{H}(\Phi^\omega).
        \end{equation*}
    \end{itemize}
\end{corollary}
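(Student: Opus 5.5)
First translate the orthogonality in (b) into a statement about $\Phi^\omega$. For $b \in B \ominus \mathbb{C}$ and $y \in N^\omega$,
\begin{equation*}
    \inner{\xi_{\Phi^\omega} y}{u b u^* \xi_{\Phi^\omega}} = \tau(y^* \Phi^\omega(u b u^*)).
\end{equation*}
So (b) says exactly that $\Phi^\omega(u b u^*) = 0$ for all $b \in B$ with $\tau(b)=0$. Here I use that $\Phi^\omega$ is the coordinatewise map, which is well defined since $\Phi$ is $\norm{\cdot}_2$-bounded. Writing $u = (u_n)_n$ with $u_n \in \mathcal{U}(P)$, this becomes
\begin{equation*}
    \lim_{n\to\omega} \norm{\Phi(u_n b u_n^*)}_2 = 0 \quad \text{for all } b \in B \ominus \mathbb{C}.
\end{equation*}
I would reduce to this for $b$ in $B \subseteq M$, viewed as constant sequences. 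The key equivalence is that $\Phi(u_n b_n u_n^*)$ lies in $L^2$, and vanishing in $L^2(N^\omega)$ is equivalent to vanishing of the $\omega$-limit of the $2$-norms. With this, (b) amounts to condition (a) of Theorem~\ref{main}, but required uniformly along a single sequence for a countable dense set.

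For (a)$\Rightarrow$(b), invoke Theorem~\ref{main} (c)$\Rightarrow$(a): for each finite $F \subseteq M \cap \ker\tau$ and each $\e>0$ there is $u \in \mathcal{U}(P)$ with $\max_{x\in F}\norm{\Phi(uxu^*)}_2<\e$. Since $B$ is separable, choose a $\norm{\cdot}_2$-dense sequence $(b_k)$ in the unit ball of $B \ominus \mathbb{C}$. Apply the above with $F_n=\{b_1,\dots,b_n\}$ and $\e = 1/n$ to get $u_n$, and set $u=(u_n)\in\mathcal{U}(P^\omega)$. Then $\lim_\omega \norm{\Phi(u_n b_k u_n^*)}_2=0$ for every $k$. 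To pass to a general $b$, approximate: $\norm{\Phi(u_n(b-b_k)u_n^*)}_2 \le \norm{\Phi(1)}^{1/2}\norm{b-b_k}_2$ by the $L^2$-boundedness of $\Phi$ noted in the proof of (c)$\Rightarrow$(a). This gives $\Phi^\omega(ubu^*)=0$ for all $b \in B\ominus\mathbb{C}$, which is (b).

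For (b)$\Rightarrow$(a), argue via Theorem~\ref{main} (a)$\Rightarrow$(c). Given a finite $F \subseteq M\cap\ker\tau$, apply (b) to $B = W^*(F)$, which is separable. This yields $u=(u_n)$ with $\lim_\omega \max_{x\in F}\norm{\Phi(u_nxu_n^*)}_2=0$, so the infimum in (a') of Theorem~\ref{main} is $0$, and hence $\overline{P\xi_\Phi N}$ is left weakly mixing. The main obstacle is just the careful identification of $\mathcal{H}(\Phi^\omega)$ inner products with the ultralimits of traces, namely that $\tau_{N^\omega}(y^*\Phi^\omega(x)) = \lim_\omega \tau(y_n^*\Phi(x_n))$. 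This requires checking that $\Phi^\omega$ is well defined on $M^\omega$ (it preserves $\omega$-null sequences by $L^2$-boundedness) and is subtracial, so that $\xi_{\Phi^\omega}N^\omega$ is dense enough to detect $\Phi^\omega(ubu^*)=0$. That last point holds because the pairing against all $y \in N^\omega$ determines an element of $L^2(N^\omega)$.
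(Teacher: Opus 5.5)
Your proposal is correct and follows the paper's proof. In both directions you reduce the orthogonality in (b) to $\lim_{n\to\omega}\norm{\Phi(u_nxu_n^*)}_2=0$, by pairing against $y=(\Phi(u_nxu_n^*))_n$, and then invoke Theorem~\ref{main}; for (b)$\Rightarrow$(a) you apply (b) to $B=W^*(F)$, exactly as the paper does. The one difference is your explicit diagonal argument over a $\norm{\cdot}_2$-dense sequence in $B$, using $\norm{\Phi(x)}_2\le\norm{\Phi(1)}^{1/2}\norm{x}_2$, which fills in a step the paper leaves implicit: the paper cites the sequence version of Theorem~\ref{main}, which is stated for separable $M$, whereas only $B$ is assumed separable here.
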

\begin{proof}
    Assume (a) first. By Theorem \ref{main}, for any separable von Neumann subalgebra $B$ of $M$, we may choose a sequence $(u_n)_{n \in \mathbb{N}} \subseteq \mathcal{U}(P)$ such that
    \begin{equation*}
        \lim_{n \rightarrow \omega} \norm{\Phi(u_n x u_n^*)}_2 = 0
    \end{equation*}
    for all $x \in B$ with $\tau(x) =0$. It follows that for any bounded $(y_n)_{n \in \mathbb{N}} \subseteq N$, we have
    \begin{equation*}
        \lim_{n\rightarrow \omega} \inner{\xi_\Phi y_n} {(u_n x u_n^*)\xi_\Phi} = \lim_{n \rightarrow \omega} \tau(y_n^* \Phi(u_n x u_n^*)) = 0.
    \end{equation*}
    Therefore, $u = (u_n)_n^\omega \in \mathcal{U}(P^\omega)$ satisfies the desired orthogonality in $\H(\Phi^\omega)$.

    Now assume (b). Fix a finite subset $F$ of $M$ such that $\tau(x) = 0$ for all $x \in F$. Let $B$ be the von Neumann subalgebra of $M$ generated by $F$. Applying (b) to this separable von Neumann subalgebra of $M$, we obtain a unitary $u \in \mathcal{U}(P^\omega)$ satisfying
    \begin{equation*}
        \xi_{\Phi^\omega} N^\omega \perp u F u^* \xi_{\Phi^\omega} \subseteq \mathcal{H}(\Phi^\omega).
    \end{equation*}
    Write $u = (u_n)_n^\omega$ for $u_n \in \mathcal{U}(P)$. It follows that
    \begin{equation*}
        \lim_{n\rightarrow \omega} \norm{\Phi(u_n x u_n^*)}_2^2 = 
        \inner{\xi_{\Phi^\omega} (\Phi(u_n x u_n^*))_n^\omega}{(u_n x u_n^*)_n^\omega \xi_{\Phi^\omega}} = 0
    \end{equation*}
    for all $x \in F$. This proves
    \begin{equation*}
        \inf_{u \in \mathcal{U}(P)} \max_{x \in F} \norm{\Phi(u x u^*)}_2 = 0
    \end{equation*}
    and (a) follows from Theorem \ref{main}.
\end{proof}

The implication (a)$\Rightarrow$(b) of Corollary \ref{asym-otho}, as discussed in the introduction, was already known to Popa \cite{Po19} in the special case where $\Phi$ is given by a conditional expectation. To be precise, let $M$ be a $\mathrm{II}_1$ factor, $N \subseteq M$ a von Neumann subalgebra such that $M \nprec_M N$ (equivalently, $\mathcal{H}(E_N) = L^2(M)$ is left weakly mixing as an $M$-$N$ bimodule), and $B \subseteq M$ a separable von Neumann subalgebra. By \cite[Corollary 1.2]{Po19}, there exists a unitary $u \in \mathcal{U}(M^\omega)$ such that
\begin{equation*}
    \tau(uxu^*y) = \tau(x)\tau(y)
\end{equation*}
for all $x \in B$ and $y \in N^\omega$, or equivalently,
\begin{equation*}
    u(B \ominus \mathbb{C})u^* \perp N^\omega \subseteq L^2(M^\omega).
\end{equation*}
Under the identification $\mathcal{H}(E_N^\omega) = L^2(M^\omega)$ as an $M^\omega$-$N^\omega$ bimodule, this recovers the special case of Corollary \ref{asym-otho} where $P = M$ and $\Phi = E_N$.

\section{More Applications} \label{sec-moreapp}

We collect several further consequences of Theorem \ref{main}. Corollary \ref{appcor1} shows that weak mixing passes from an irreducible subfactor $P \subseteq M$ up to $M$ itself. We then use Theorem \ref{main} to give an alternative proof of \cite[Lemma 2.3]{Po19} (Corollary \ref{cond}), bypassing the original patching argument. Finally, Corollary \ref{suff} isolates a sufficient condition for the failure of the flattening phenomenon when $\Phi$ is a self-map $M \rightarrow M$, and we use it to recover the classical fact that the Pimsner-Popa inequality obstructs asymptotic orthogonalization for finite-index subfactors.

\begin{corollary} \label{appcor1}
    Let $\Phi: M \rightarrow N$ be a subtracial completely positive map and $P \subseteq M$ be an irreducible inclusion of $\mathrm{II}_1$ factors. If $\overline{P \xi_\Phi N}$ is left weakly mixing as a $P$-$N$ bimodule, then $\H(\Phi)$ is left weakly mixing as an $M$-$N$ bimodule. Equivalently, whenever
    \begin{equation*}
        \inf_{u \in \mathcal{U}(P)} \norm{\Phi(x u x')}_1 =0
    \end{equation*}
    for all $x,x' \in P$, we have
    \begin{equation*}
        \inf_{u \in \mathcal{U}(M)} \norm{\Phi(x u x')}_1 =0
    \end{equation*}
    for all $x, x' \in M$.
\end{corollary}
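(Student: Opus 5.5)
The plan is to route through the flattening condition (a) of Theorem~\ref{main}, which is monotone in the unitary group. Assume $\overline{P\xi_\Phi N}$ is left weakly mixing as a $P$-$N$ bimodule. Since $P\subseteq M$ is an irreducible inclusion of $\mathrm{II}_1$ factors, the implication (c)$\Rightarrow$(a) of Theorem~\ref{main} gives a net $(u_i)_{i\in I}\subseteq\mathcal{U}(P)$ with
\begin{equation*}
\lim_{i\in I}\norm{\Phi(u_ixu_i^*)-\tau(x)\Phi(1)}_2=0
\end{equation*}
for all $x\in M$. Because $\mathcal{U}(P)\subseteq\mathcal{U}(M)$, the same net witnesses condition (a) of Corollary~\ref{cormain}, i.e.\ condition (a) of Theorem~\ref{main} for the trivially irreducible subfactor $M\subseteq M$. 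Applying (a)$\Rightarrow$(c) of Theorem~\ref{main} with $P$ replaced by $M$ then shows that $\overline{M\xi_\Phi N}=\H(\Phi)$ is left weakly mixing as an $M$-$N$ bimodule; here I use the cyclicity of $\xi_\Phi$ noted in subsection~\ref{protein}.

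For the equivalent formulation, I will invoke Proposition~\ref{central}, whose (a)$\Leftrightarrow$(c) holds for an arbitrary von Neumann subalgebra. With subalgebra $P$, it translates the hypothesis ``$\inf_{u\in\mathcal{U}(P)}\norm{\Phi(xux')}_1=0$ for all $x,x'\in P$'' into left weak mixing of $\overline{P\xi_\Phi N}$. With subalgebra $M$, it translates the conclusion into left weak mixing of $\overline{M\xi_\Phi N}=\H(\Phi)$. The ``equivalently'' clause is therefore just the first statement rewritten through Proposition~\ref{central} twice.

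I expect no step to be a real obstacle. The only point to check is that Theorem~\ref{main} legitimately applies with $P=M$: $M$ is a $\mathrm{II}_1$ factor, so $M'\cap M=\mathbb{C}$ and the inclusion is irreducible, and the direction actually needed, (a)$\Rightarrow$(b)$\Rightarrow$(c), uses only that $M$ is a factor (Propositions~\ref{ppt} and \ref{bc}). Alternatively, one could argue directly: the net $(u_i)$ from $\mathcal{U}(P)$ satisfies Definition~\ref{weakly mixing}(4) for the subbimodule, but extending to all of $\H(\Phi)$ directly would need vectors outside $\overline{P\xi_\Phi N}$. The flattening route avoids this, since it controls $\Phi(u_ixu_i^*)$ for every $x\in M$ and not only for $x\in P$.
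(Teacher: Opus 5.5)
Your proposal is correct and follows essentially the same route as the paper: you get a flattening net in $\mathcal{U}(P)$ from (c)$\Rightarrow$(a) of Theorem~\ref{main}, observe that it lies in $\mathcal{U}(M)$, and then apply Theorem~\ref{main} with $P=M$ (that is, Corollary~\ref{cormain}). Your use of Proposition~\ref{central} once for $P$ and once for $M$ to justify the ``equivalently'' clause spells out a step the paper leaves implicit.
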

\begin{proof}
    By assumption and Theorem \ref{main}, we have
    \begin{equation*}
        \inf_{u \in \mathcal{U}(P)} \max_{x \in F}\norm{\Phi(uxu^*)}_2 = 0
    \end{equation*}
    for any finite subset $F \subseteq M$ such that $\tau(x) = 0$ for all $x \in F$. It follows that
    \begin{equation*}
        \inf_{u \in \mathcal{U}(M)} \max_{x \in F}\norm{\Phi(uxu^*)}_2 = 0.
    \end{equation*}
    Again by Theorem \ref{main} (or Corollary \ref{cormain}), the desired assertion is immediate.
\end{proof}
 
Before giving an alternative proof of \cite[Lemma 2.3]{Po19}, we first recall Popa's intertwining-by-bimodules technique as developed in \cite{Po06a,Po06b}. For further details, we also refer to Chapter 17 of the textbook \cite{AP17}.

\begin{definition} \label{def-xcorner}
    Let $(M,\tau)$ be a tracial von Neumann algebra, and $P, N \subseteq M$ be its von Neumann subalgebras. We write $P \prec_M N$ and say that a corner of $P$ can be intertwined into $N$ inside $M$ if the following
    equivalent conditions hold:
    \begin{enumerate}
        \item There exists no net $(u_i) \subseteq \mathcal{U}(P)$ such that
        \begin{equation*}
            \lim_i \bigl\| E_N(x^* u_i y) \bigr\|_2 = 0
        \end{equation*}
        for all $x,y \in M$.
        
        \item There exists a non-zero $P$-$N$ subbimodule of
        $L^2(M)$ with finite right $N$-dimension, i.e., $L^2(M)$ is not left weakly mixing as a $P$-$N$ bimodule.
      
        \item There exist $n \in \mathbb{N}$, a projection
        $q \in M_n(\mathbb{C}) \otimes N$, a non-zero partial isometry
        $v \in M_{1,n}(\mathbb{C}) \otimes M$ and a normal unital
        $*$-homomorphism $\theta \colon P \to q\bigl(M_n(\mathbb{C}) \otimes N\bigr)q$
        such that $v^*v \leq q$ and $xv = v\theta(x)$ for every $x \in P$.
    \end{enumerate}
\end{definition}

\begin{corollary}[\cite{Po19}, Lemma 2.3] \label{cond}
    Let $M$ be a $\mathrm{II}_1$ factor, $N \subseteq M$ a von Neumann subalgebra, and $P \subseteq M$ an irreducible subfactor such that $P \not\prec_M N$. Given any finite set $F \subseteq M$ such that $\tau(x) = 0$ for all $x \in F$, we have
    \begin{equation*}
        \inf_{u \in \mathcal{U}(P)} \max_{x \in F} \norm{E_N(u x u^*)}_2 = 0
    \end{equation*}
\end{corollary}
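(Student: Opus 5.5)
We apply Theorem~\ref{main} with $\Phi = E_N : M \rightarrow N$, which is completely positive and trace-preserving, hence subtracial. The conclusion is exactly condition (a') of Theorem~\ref{main} for $\Phi = E_N$, so it suffices to verify condition (c): the $P$-$N$ bimodule $\overline{P\xi_{E_N} N}$ is left weakly mixing.

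First we identify $\mathcal{H}(E_N)$ with $L^2(M)$ as an $M$-$N$ bimodule. The map $x \otimes y \mapsto \widehat{xy}$ is isometric, because
\begin{equation*}
\inner{x_1\otimes y_1}{x_2 \otimes y_2} = \tau(y_1^* E_N(x_1^*x_2) y_2) = \tau(y_1^* x_1^* x_2 y_2) = \inner{\widehat{x_1y_1}}{\widehat{x_2y_2}}.
\end{equation*}
It is also $M$-$N$ bilinear, and its range contains $M$, so it is a bimodule isomorphism onto $L^2(M)$ sending $\xi_{E_N}$ to $\widehat{1}$. Under this identification, $\overline{P\xi_{E_N}N}$ becomes the $P$-$N$ subbimodule $\overline{PN} \subseteq L^2(M)$.

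Next we use the hypothesis. By Definition~\ref{def-xcorner}(2), $P \nprec_M N$ means $L^2(M)$ is left weakly mixing as a $P$-$N$ bimodule, i.e.\ it has no nonzero $P$-$N$ subbimodule of finite right $N$-dimension. Any $P$-$N$ subbimodule of $\overline{PN}$ is also a subbimodule of $L^2(M)$, so $\overline{PN}$ has none either. By Definition~\ref{weakly mixing}(1), $\overline{PN}$ is therefore left weakly mixing, which is (c). Alternatively, Proposition~\ref{central}(c) can be checked directly: Definition~\ref{def-xcorner}(1) gives $\inf_{u\in\mathcal{U}(P)}\norm{E_N(xux')}_2 = 0$ for $x,x'\in P$, and $\norm{\cdot}_1 \le \norm{\cdot}_2$.

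Theorem~\ref{main} applies because $M$ is a $\mathrm{II}_1$ factor and $P$ is an irreducible subfactor. One point needs care: $P$ must be of type $\mathrm{II}_1$. An irreducible subfactor of a $\mathrm{II}_1$ factor cannot be finite dimensional, since a finite-dimensional factor $M_k(\mathbb{C})$ has relative commutant $\cong (M_k(\mathbb{C}))' \cap M$ of dimension larger than one in a $\mathrm{II}_1$ factor. So $P$ is infinite dimensional and hence a $\mathrm{II}_1$ factor. Then (c)$\Rightarrow$(a) of Theorem~\ref{main} yields the claim.

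The only steps requiring checking are the identification $\mathcal{H}(E_N)\simeq L^2(M)$ and the observation that left weak mixing passes to subbimodules; both are routine, so no real obstacle is expected.
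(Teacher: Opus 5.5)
Your argument is correct and is essentially the paper's proof. You identify $\mathcal{H}(E_N)$ with $L^2(M)$, so that $\overline{P\xi_{E_N}N}$ is a $P$-$N$ subbimodule of the left weakly mixing bimodule $L^2(M)$ and hence is itself left weakly mixing, and then apply (c)$\Rightarrow$(a) of Theorem~\ref{main}; your check that an irreducible subfactor must be of type $\mathrm{II}_1$ makes explicit a hypothesis of Theorem~\ref{main} that the paper uses without comment.

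I would drop the ``alternatively'' aside. Proposition~\ref{central}(c) tests one pair $x,x'$ at a time and does not by itself imply weak mixing. For example, take $P=M=N$ and $\Phi(x)=qxq$ with $\tau(q)\leq 1/2$: the supports of $qx$ and $x'q$ have trace at most $1/2$, so some unitary $u$ gives $qxux'q=0$, yet $\mathcal{H}(\Phi)\cong L^2(P)$ is not left weakly mixing. What $P\nprec_M N$ actually supplies is a single net that works for all pairs at once, and that is what Definition~\ref{weakly mixing}(4) requires.
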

\begin{proof}
    By the assumption $P \not\prec_M N$, $L^2(M)$ is left weakly mixing as a $P$-$N$ bimodule. With the canonical $M$-$N$ bilinear isomorphism
    \begin{equation*}
        \mathcal{H}(E_N) \rightarrow L^2(M) : x \otimes y \mapsto xy
    \end{equation*}
    (see \cite[Section 13.1.2]{AP17} for details), we can identify $\overline{P \xi_{E_N} N}$ as a $P$-$N$ subbimodule of $L^2(M)$. It follows that $\overline{P \xi_{E_N} N}$ is left weakly mixing and the desired assertion is immediate from Theorem \ref{main}.
\end{proof}

Let $M$ be a $\mathrm{II}_1$ factor and $N \subseteq M$ be a subfactor with $[M:N] < \infty$. By \cite[Proposition 2.1]{PP86}, we have the Pimsner-Popa inequality
\begin{equation*}
    E_N(x) \geq [M:N]^{-1} x
\end{equation*}
for all $x \in M_+$. Assume for the contrary that
\begin{equation*}
    \inf_{u \in \mathcal{U}(M)} \max_{x \in F} \norm{E_N(u x u^*) -\tau(x)}_2 =0
\end{equation*}
for any finite subset $F$ of $M$. In particular, for any projection $p \in \mathcal{P}(M)$, we obtain
\begin{equation*}
    \inf_{u \in \mathcal{U}(M)} \norm{E_N(u p u^*) - \tau(p)}_2 = 0.
\end{equation*}
Fix $u \in \mathcal{U}(M)$ with $\norm{E_N(u p u^*)}_2 \leq 2 \tau(p)$. By the Pimsner-Popa inequality,
\begin{equation*}
    2 \tau(p) \geq \norm{E_N(u p u^*)}_2 \geq [M:N]^{-1} \norm{upu^*}_2 = [M:N]^{-1}\sqrt{\tau(p)}.
\end{equation*}
It follows that $0 < [M:N]^{-1} \leq 2 \sqrt{\tau(p)}$ for all $p \in \mathcal{P}(M)$, a contradiction with $\tau(\mathcal{P}(M)) = [0,1]$. The converse is also true when $N$ is an irreducible subfactor of $M$.

\begin{corollary} \label{laundry}
    Suppose that $N$ is an irreducible subfactor of a $\mathrm{II}_1$ factor $M$. Then $[M : N] = \infty$ if and only if there exists a net $(u_i)_{i \in I} \subseteq \mathcal{U}(M)$ satisfying
    \begin{equation*}
        \lim_{i \in I} \norm{E_N(u_i x u_i^*) - \tau(x)}_2 = 0
    \end{equation*}
    for all $x \in M$.
\end{corollary}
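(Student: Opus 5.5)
The forward direction ($[M:N]=\infty$ implies the existence of the net) goes through Theorem~\ref{main} in the special case $P=M$, $\Phi=E_N$, i.e.\ Corollary~\ref{cormain}. The converse direction ($[M:N]<\infty$ excludes such a net) is exactly the Pimsner--Popa argument carried out just before the statement. So only the forward direction needs new work.

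For the forward direction, assume $[M:N]=\infty$. By Corollary~\ref{cormain} applied to $\Phi=E_N:M\to N$, which is subtracial since it is trace preserving and has $\Phi(1)=1$, it suffices to show that $\mathcal{H}(E_N)\simeq L^2(M)$ is left weakly mixing as an $M$-$N$ bimodule. Equivalently, by Definition~\ref{def-xcorner}, it suffices to show $M\nprec_M N$. Arguing by contradiction, suppose there is a nonzero $M$-$N$ subbimodule $\K\subseteq L^2(M)$ of finite right $N$-dimension.

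Any $M$-$N$ subbimodule of $L^2(M)$ has projection $P_\K$ in $M'\cap \langle M,e_N\rangle = JN'J\cap B(L^2(M))$ commuting with the right $N$-action. Hence $P_\K = J q J$ for some projection $q\in N'\cap M$. Irreducibility gives $N'\cap M=\mathbb{C}$, so $q=1$ and $\K=L^2(M)$. Then $\dim_N L^2(M)=[M:N]<\infty$, a contradiction.

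I expect this identification of $M$-$N$ subbimodules with projections of $N'\cap M$ to be the main point, and it is where irreducibility is used. If one prefers to avoid the commutant computation, the following alternative works. Apply Theorem~\ref{main}(b) with $P=M$ and $\Phi=E_N$. Since $E_M\circ E_N^*\circ E_N=E_N$, the failure of flattening would give some $a\in M$ with $\tau(a)\neq0$ and $E_N(x)\ge a^*xa$ for all $x\in M_+$. Taking $x=p$ and traces yields
\[
\tau(p)\ \ge\ \tau(a^*pa)\quad\text{for all } p\in\mathcal{P}(M).
\]
This does not immediately yield finite index, so the bimodule route above is the one I would use, with the standard fact that $\dim_N L^2(M)=[M:N]$.

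For the converse, suppose $[M:N]<\infty$ and that such a net exists. Then for every projection $p$ there is $u$ with $\norm{E_N(upu^*)}_2\le 2\tau(p)$. Combining this with $\norm{E_N(upu^*)}_2\ge [M:N]^{-1}\sqrt{\tau(p)}$ from the Pimsner--Popa inequality gives $[M:N]^{-1}\le 2\sqrt{\tau(p)}$ for every projection $p$. Letting $\tau(p)\to0$ gives a contradiction. Note that this direction does not use irreducibility.
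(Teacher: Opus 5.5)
Your proof is correct. For the direction that actually needs irreducibility, it is more complete than the paper's written proof.

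\textbf{The direction ``net $\Rightarrow [M:N]=\infty$''.} Your Pimsner--Popa argument is the same computation the paper gives just before the corollary.

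\textbf{The direction ``$[M:N]=\infty \Rightarrow$ net''.} The paper's proof assumes $[M:N]<\infty$, notes that $L^2(M)$ is then a subbimodule of itself with finite $N$-dimension, hence not left weakly mixing, and invokes Theorem~\ref{main}. That chain only yields ``finite index $\Rightarrow$ no net'', which is the same implication as the Pimsner--Popa argument, and it never uses $N'\cap M=\mathbb{C}$. What this direction needs is the reverse implication ``$L^2(M)$ not left weakly mixing $\Rightarrow [M:N]<\infty$''.

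Your argument supplies exactly that. The projection onto any $M$-$N$ subbimodule of $L^2(M)$ lies in
$M'\cap\langle M,e_N\rangle = JMJ\cap (JNJ)' = J(N'\cap M)J=\mathbb{C}$.
So the only such subbimodules are $0$ and $L^2(M)$, and $\dim_N L^2(M)=[M:N]=\infty$. Combined with $\mathcal{H}(E_N)\simeq L^2(M)$ and Corollary~\ref{cormain}, this closes the argument.

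\textbf{One small slip.} The identity $M'\cap\langle M,e_N\rangle = JN'J\cap B(L^2(M))$ is not right as written. If $N'$ is the commutant of $N$ on $L^2(M)$, then $JN'J=\langle M,e_N\rangle$ itself. The correct identification is $J(N'\cap M)J$, which is what you actually use in the next sentence, so the conclusion is unaffected. Your aside about condition (b) is harmless, since you do not rely on it.
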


\begin{proof}
    We have already observed the `only if' part. For the converse, assume $[M:N]<\infty$. In particular, $L^2(M)$ is an $M$-$N$ subbimodule of itself whose right $N$-dimension equals $[M:N] < \infty$. It follows that $L^2(M) = \overline{M \xi_{E_N} N}$ is not left weakly mixing. Now Theorem \ref{main} completes the proof. 
\end{proof}

The argument preceding Corollary \ref{laundry} shows concretely how the Pimsner-Popa inequality prevents $E_N$ from \textit{flattening}. 

Let $M$ be a separable $\mathrm{II}_1$ factor. The space $\mathcal{S}(M)$ of von Neumann subalgebras of $M$, equipped with the \textit{Maréchal topology}, is a Polish space. Moreover, for $N_n,N \in \mathcal{S}(M)$, we have
\begin{equation*}
    N_n \rightarrow N
\end{equation*}
in the Maréchal topology if and only if
\begin{equation*}
    e_{N_n} \rightarrow e_N
\end{equation*}
in the strong operator topology of $B(L^2(M))$. We refer to \cite{HW98} for further discussion of the Maréchal topology.

Suppose that $N_0 \subseteq M$ is an irreducible subfactor with $[M:N_0] = \infty$. By Corollary~\ref{laundry}, there exists a sequence $(u_n)_{n \in \mathbb{N}} \subseteq \mathcal{U}(M)$ such that
\begin{equation} \label{gold}
    \lim_{n \rightarrow \infty}
    \norm{E_{N_0}(u_n^*xu_n)-\tau(x)}_2=0,
    \qquad x\in M.
\end{equation}
Set $N_n=u_nN_0u_n^*$. Since
\begin{equation*}
    E_{N_n}(x)
    =
    u_nE_{N_0}(u_n^*xu_n)u_n^*,
\end{equation*}
condition \eqref{gold} implies
\begin{equation*}
    \norm{E_{N_n}(x)-\tau(x)}_2 \rightarrow 0,
    \qquad x\in M.
\end{equation*}
Equivalently, $e_{N_n}\rightarrow e_{\mathbb{C}}$ in the strong operator topology of $B(L^2(M))$. Thus
\begin{equation*}
    N_n\rightarrow \mathbb{C}
\end{equation*}
in the Maréchal topology. Therefore, if we denote
\begin{equation*}
    \mathcal{O}(N_0) = \{uN_0u^*:u\in\mathcal{U}(M)\}
\end{equation*}
the unitary orbit of $N_0$ under the conjugation action of $\mathcal{U}(M)$ on $\mathcal{S}(M)$, we have
\begin{equation*}
    \mathbb{C}
    \in
    \overline{\mathcal{O}(N_0)}
    \setminus
    \mathcal{O}(N_0).
\end{equation*}
In particular, $\mathcal{O}(N_0)$ is not closed in the Maréchal topology.

We next show that the converse fails, i.e., $\mathcal{O}(N_0)$ might fail to be closed even if $[M:N_0]<\infty$. Let $R=\otimes_{n=1}^{\infty} M_2(\mathbb C)$ be the hyperfinite
$\mathrm{II}_1$ factor and define
\begin{equation*}
    \gamma
    =
    \bigotimes_{n=1}^{\infty}\operatorname{Ad}(v)
    \in \operatorname{Aut}(R),
    \qquad
    v
    =
    \begin{pmatrix}
        1&0\\
        0&-1
    \end{pmatrix}.
\end{equation*}
For a matrix
\begin{equation*}
    w
    =
    \begin{pmatrix}
        0&1\\
        1&0
    \end{pmatrix},
\end{equation*}
let $w_n\in R$ denote the element which is $w$ in the $n$-th tensor
component and the identity elsewhere. It is easy to see that
$(w_n)_{n=1}^{\infty}$ is a central sequence in $R$.

Define $M=R\overline\otimes R\simeq R$. Since $vwv^*=-w$, we have
\begin{equation*}
    \norm{(\gamma\otimes\gamma)(w_n\otimes1)-w_n\otimes1}_2
    =
    \norm{-w_n\otimes1-w_n\otimes1}_2
    =
    2.
\end{equation*}
On the other hand, for every $u\in\mathcal U(M)$, we have
\begin{equation*}
    \lim_{n\rightarrow\infty}
    \norm{\operatorname{Ad}(u)(w_n\otimes1)-w_n\otimes1}_2
    =
    0,
\end{equation*}
because $(w_n)_{n=1}^{\infty}$ is a central sequence in $R$. This proves $\gamma\otimes\gamma \in \operatorname{Aut}(M)\setminus\operatorname{Inn}(M)$. Similarly, one can see that $\gamma \otimes \operatorname{id},\ \operatorname{id} \otimes \gamma \in \operatorname{Aut}(M)\setminus\operatorname{Inn}(M)$.

Now consider the two von Neumann subalgebras
\begin{equation*}
    N_0=M^{\gamma\otimes\operatorname{id}},
    \qquad
    N_1=M^{\operatorname{id}\otimes\gamma}
\end{equation*}
of $M$. Observe that $N_0, N_1 \subseteq M$ are irreducible subfactors (cf. \cite[Lemma 6]{NT60}) of
$M$ such that $[M:N_0] = [M:N_1] = 2$ and that the flip automorphism $\theta \in \operatorname{Aut}(M)$ satisfies $\theta(N_0) = N_1$.

Assume, for a contradiction, that $uN_0u^*=N_1$ for some $u\in\mathcal U(M)$. Define
\begin{equation*}
    \beta
    =
    \operatorname{Ad}(u)
    \circ
    (\gamma\otimes\operatorname{id})
    \circ
    \operatorname{Ad}(u^*)
    =
    \operatorname{Ad}(u(\gamma\otimes\operatorname{id})(u^*))
    \circ
    (\gamma\otimes\operatorname{id})
    \in\operatorname{Aut}(M).
\end{equation*}
Observe that
\begin{equation*}
    M^\beta
    =
    uM^{\gamma\otimes\operatorname{id}}u^*
    =
    uN_0u^*
    =
    N_1
    =
    M^{\operatorname{id}\otimes\gamma}.
\end{equation*}
Since $\beta$ and $\operatorname{id}\otimes\gamma$ are involutions, we obtain
\begin{equation*}
    \frac{\operatorname{id}+\beta}{2}
    =
    E_{M^\beta}
    =
    E_{M^{\operatorname{id}\otimes\gamma}}
    =
    \frac{\operatorname{id}+\operatorname{id}\otimes\gamma}{2}.
\end{equation*}
Hence $\beta = \operatorname{id} \otimes \gamma$, i.e.,
\begin{equation*}
    \operatorname{Ad}(u(\gamma\otimes\operatorname{id})(u^*))
    \circ
    (\gamma\otimes\operatorname{id})
    =
    \operatorname{id}\otimes\gamma.
\end{equation*}
Consequently,
\begin{equation*}
    \gamma\otimes\gamma
    =
    (\operatorname{id}\otimes\gamma)
    \circ
    (\gamma\otimes\operatorname{id})^{-1}
    =
    \operatorname{Ad}(u(\gamma\otimes\operatorname{id})(u^*))
    \circ
    (\gamma\otimes\operatorname{id})
    \in
    \operatorname{Inn}(M),
\end{equation*}
which contradicts that $\gamma\otimes\gamma$ is outer. Therefore, $N_0$ and $N_1$ are not inner conjugate.

By \cite[Corollary~3.2]{Connes76}, every automorphism of $M \simeq R$ is approximately inner. Hence there exists a sequence $(u_n)_{n\in\mathbb{N}}\subseteq\mathcal U(M)$ such that
\begin{equation} \label{melatonin}
    \lim_{n\rightarrow\infty}
    \norm{\theta(x)-u_nxu_n^*}_2=0,
    \qquad x\in M.
\end{equation}

Set $N_n=u_nN_0u_n^*$. It remains to show that $N_n \rightarrow N$ in the Maréchal topology. For $\eta\in\operatorname{Aut}(M)$, let $U_\eta$ denote the unitary extension of $\eta$ to $L^2(M)$. Now \eqref{melatonin} shows
\begin{equation*}
    U_{\operatorname{Ad}(u_n)}
    \rightarrow
    U_\theta
\end{equation*}
in the strong operator topology of $B(L^2(M))$. Since these operators are unitary, we also have
\begin{equation*}
    U_{\operatorname{Ad}(u_n^*)}
    =
    U_{\operatorname{Ad}(u_n)}^*
    \rightarrow
    U_\theta^*
    =
    U_{\theta^{-1}}
\end{equation*}
in the strong operator topology. It follows that
\begin{equation*}
    e_{N_n}
    =
    U_{\operatorname{Ad}(u_n)}
    e_{N_0}
    U_{\operatorname{Ad}(u_n^*)}
    \rightarrow
    U_\theta e_{N_0}U_{\theta^{-1}}
    =
    e_{\theta(N_0)}
    =
    e_N
\end{equation*}
in the strong operator topology of $B(L^2(M))$ as desired.

The following corollary places this same mechanism in the general framework of Theorem 1.1. 

\begin{corollary} \label{suff}
    Let $M$ be a $\mathrm{II}_1$ factor and $\Phi : M \rightarrow M$ be a subtracial completely positive map. Suppose there exists $a \in M \setminus \{0\}$ such that
    \begin{equation*}
        \Phi(x) \geq a^* x a
    \end{equation*}
    for all $x \in M_+$. Then for some finite subset $F \subseteq \ker \tau_M$, we have
    \begin{equation*}
        \inf_{u \in \mathcal{U}(M)} \max_{x \in F} \norm{\Phi(u x u^*)}_2 > 0.
    \end{equation*}
\end{corollary}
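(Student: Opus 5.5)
The plan is to adapt the argument from the proof of Proposition~\ref{ppt} (and the Pimsner--Popa computation preceding Corollary~\ref{laundry}). We use the given inequality $\Phi(x)\geq a^*xa$ directly instead of passing through $E_P\circ\Phi^*\circ\Phi$. Set $b=aa^*\in M_+$, so that $\tau(b)=\norm{a}_2^2>0$ because $a\neq 0$. Fix $n\in\mathbb{N}$ with
\begin{equation*}
    \frac{2\norm{\Phi(1)}_2^2}{n} < \tau(b)^2 = \norm{a}_2^4 .
\end{equation*}
Since $M$ is a $\mathrm{II}_1$ factor, we may choose projections $e_1,\dots,e_n\in M$ with $\sum_i e_i=1$ and $\tau(e_i)=1/n$. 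We then take $F=\{e_i-\tau(e_i):1\leq i\leq n\}\subseteq\ker\tau_M$.

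Suppose, towards a contradiction, that the infimum in question is $0$. Exactly as in \eqref{restriction}, there is $u\in\mathcal{U}(M)$ with
\begin{equation*}
    \sum_{i=1}^n\norm{\Phi(ue_iu^*)}_2^2\leq \frac{2\norm{\Phi(1)}_2^2}{n}<\norm{a}_2^4 .
\end{equation*}
Write $p_i=ue_iu^*$; these are projections with $\sum_i p_i=1$. By hypothesis $0\leq a^*p_ia\leq \Phi(p_i)$. For positive operators $0\leq A\leq B$ we have $\norm{A}_2\leq\norm{B}_2$, since
\begin{equation*}
    \tau(A^2)=\tau(A^{1/2}AA^{1/2})\leq\tau(A^{1/2}BA^{1/2})=\tau(B^{1/2}AB^{1/2})\leq\tau(B^2).
\end{equation*}
Hence
\begin{equation*}
    \sum_{i=1}^n\norm{\Phi(p_i)}_2^2\geq\sum_{i=1}^n\norm{a^*p_ia}_2^2=\sum_{i=1}^n\tau(p_i\,b\,p_i\,b)=\sum_{i=1}^n\norm{p_ibp_i}_2^2=\Bigl\|\sum_{i=1}^n p_ibp_i\Bigr\|_2^2 .
\end{equation*}
The last equality uses that the $p_iMp_i$ are mutually orthogonal in $L^2(M)$.

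Now $\sum_i p_ibp_i=E_{Q}(b)$ with $Q=(\oplus_i\mathbb{C}p_i)'\cap M$, as observed in the proof of Proposition~\ref{ppt}. Since $\mathbb{C}\subseteq Q$, we get $\norm{E_Q(b)}_2\geq\norm{E_{\mathbb{C}}(b)}_2=\tau(b)$. Therefore $\sum_i\norm{\Phi(p_i)}_2^2\geq\tau(b)^2=\norm{a}_2^4$, which contradicts the choice of $n$.

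The only point needing care is the passage from the operator inequality to the $2$-norm bound, which is the monotonicity of $\norm{\cdot}_2$ on positive operators recorded above. There is no real obstacle: the role played by $E_{P'\cap M}(a)$ in Proposition~\ref{ppt} is taken here by the scalar $\tau(aa^*)$, which is nonzero for every nonzero $a$ since $\tau$ is faithful. This is why no relative-commutant hypothesis is needed.
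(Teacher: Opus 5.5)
Your proof is correct and is essentially the paper's argument. The paper shows $(\Phi^*\circ\Phi)(x)\geq hxh$ with $h=aa^*$ and then invokes Theorem~\ref{main}, which here amounts to Proposition~\ref{ppt} with $P=M$ and $E_{M'\cap M}(h)=\tau(h)\neq 0$; you instead inline the computation of Proposition~\ref{ppt}, obtaining the same key bound $\norm{\Phi(p_i)}_2^2\geq\tau(p_ibp_ib)$ from monotonicity of $\norm{\cdot}_2$ on positive operators, which is just the diagonal case of the paper's inequality for $\Phi^*\circ\Phi$.
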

\begin{proof}
    In view of Theorem \ref{main}, it suffices to find $h \in M_+ \setminus \{0\}$ such that
    \begin{equation*}
        (\Phi^*\circ\Phi)(x) \geq h x h
    \end{equation*}
    for all $x \in M_+$. For all $x,y \in M_+$, we have
    \begin{equation*}
        \tau(y (\Phi^* \circ \Phi)(x)) = \tau(\Phi(y) \Phi(x)) \geq \tau(a^* y a a^* x a) = \tau(y (a a^* x a a^*)).
    \end{equation*}
    Therefore, the desired inequality holds with $h = aa^* \in M_+ \setminus \{0\}$.
\end{proof}

\printbibliography

\end{document}